\numberwithin{equation}{section}
\newtheorem{thm}{Theorem}[section]
\newtheorem{prop}[thm]{Proposition}
\newtheorem{lemma}[thm]{Lemma}
\newtheorem{defn}[thm]{Definition}
\newcommand{\wot}{W^{1,2}\left(\Omega\right)}
\newcommand{\wtt}{W^{2,2}\left(\Omega\right)}
\newcommand{\po}{\partial\Omega}
\newcommand{\iot}{\int_{\Omega_T}}
\newcommand{\ot}{\Omega_T}
\begin{document}
	\title[Fourth-order exponential pde's]{Existence Theorems For a Fourth-Order Exponential PDE Related to Crystal Surface Growth}
	\author{Brock C. Price and Xiangsheng Xu}\thanks
	%\addressNon-uniqueness of weak solutions to a doubly nonlinear fourth order elliptic equation 
	{Department of Mathematics and Statistics, Mississippi State
		University, Mississippi State, MS 39762.
		{\it Email}: bcp193@msstate.edu (Brock C. Price); xxu@math.msstate.edu (Xiangsheng Xu).}
	\keywords{ Crystal surface models, Exponential nonlinearity, Existence, Nonlinear fourth order parabolic equations.
		%Non liear functions of distributions,
	} \subjclass{35D30, 35Q99, 35A01.}
	\begin{abstract} 	In this article we prove the global existence of a unique strong solution to the initial boundary-value problem for a fourth-order exponential PDE. The equation we study was originally proposed to study the evolution of crystal surfaces, and was derived by applying a nonstandard scaling regime to a microscopic Markov jump process with Metropolis rates. Our investigation here finds that compared to the PDE's which use Arhenious rates, (and also have a fourth order exponential nonlinearity) the hyperbolic sine nonlinearity in our equation can offer much better control over the exponent term even in high dimensions. 
	\end{abstract}
	\maketitle
	
	\section{Introduction}\label{intro}
	
	Let $\Omega$ be a bounded domain in $\mathbb{R}^N$ with smooth boundary $\partial \Omega$ and $T >0$. We consider the initial boundary-value problem,
	\begin{align}
		\frac{\partial u}{\partial t} -\Delta\sinh(-\Delta u) = 0 \;\; \mbox{ in } \; \Omega_T \equiv \Omega \times \left(0,T\right), \label{i1} \\
		\nabla u \cdot \nu = \nabla \sinh(-\Delta u) \cdot \nu = 0 \;\; \mbox{ on } \; \Sigma_T \equiv \partial \Omega \times \left( 0, T \right), \label{i2} \\
		u(x,0) = u_0(x) \;\; \mbox{ on } \; \Omega, \label{i3}
	\end{align}
	% \frac{1}{2} \Delta e^{ - \Delta u } + \frac{1}{2} \Delta e^{ \Delta u } 
	where $\nu$ is the unit outward normal vector to the boundary.
	
	This equation comes from the study of crystal surfaces. If the space dimension is $N=1$, then equation \eqref{i1} arises as the limit of a microscopic Markov jump process, with a quadratic interaction potential and with Metropolis-type transition rates.\cite{GKLLM} In particular, let $h$ denote the height of the crystal, then the resulting equation for surface growth with Metropolis-type rates is given by,
	
	\begin{equation}
		\partial_t h = \frac{1}{2} \partial_x \left[ e^{-\partial^3_x u} - e^{\partial^3_x u} \right]. \label{imp}
	\end{equation}
	Upon setting $u = \partial_x h$, we arrive at our equation of interest,
	\begin{equation}
		\partial_t u = \partial_{xx} \left( \sinh(-\partial_{xx} u) \right) \label{met}
	\end{equation}

	The argument in \cite{GKLLM} relies on assuming that in local equilibrium, the Markov process's distribution is a local Gibbs measure. This assumption is correct for Arrhenius rates, which gives rise to a similar PDE,
	\begin{equation}
		\frac{\partial u}{\partial t} = \Delta e^{-\Delta u}. \label{arh}
	\end{equation}
	However, this assumption is not correct for the Metropolis rates, and only appears to be valid when the temperature goes to infinity. \cite{AK}  Without an explicit representation for the LE distribution, one cannot write down the PDE explicitly. More recently the author in \cite{AK} developed a method to determine the correct PDE numerically at any temperature. This was done by computing a temperature-dependent multiplicative correction to the current in the approximate PDE \eqref{met}, and it takes the form,
	\begin{equation}
		\partial_t h = - \partial_x\left(\sigma_K(h_{xxx})\sinh(Kh_{xxx})\right). 
	\end{equation}
	Here $\sinh(Kh_{xxx})$ is the macroscopic current under the local Gibbs assumption, $K$ is the inverse of the temperature, and $\sigma_K$ is the temperature dependent correction, which converges to $1$ as $K$ tends to zero, giving back equation \eqref{met}. \cite{AK}

	The Models obtained from the Metropolis rates unfortunately do not generalize well in higher dimensions, and the full multidimensional PDE has not yet been derived. This is quite different the analogous exponential PDE derived with Arrhenius-type rates, which does generalize to higher dimensions. \cite{MW}
	
	We also note that these PDE's are arising by applying a nonstandard scaling regime to the microscopic dynamics. Using a more standard scaling regime, the exponential will become linear. For the Arrhenius rates, this more standard approach has a limitation in that it treats local maxima and minima of $h$ symmetrically, whereas the exponential PDE derived using the nonstandard scaling does not. \cite{CLLMW} 
	
	For the Arrhenius rates equation, \eqref{arh} it was observed in \cite{LX} that one had to allow the possibility that the exponent could be a measure-valued function if the space dimension greater than 1. This is due to the lack of estimates for the exponent term. To remove the singularity in the exponent, one must impose a smallness condition on the initial data \cite{LS,PX}. For the exponential PDE \eqref{i1}, we are able to improve this, and find that the exponent is a $\wot$ function without any smallness assumptions on the given data, and, in particular, is not a measure.
	
	 What is interesting here is that this improvement still holds for nonlinear exponents, and is only due to the difference between the hyperbolic sine verses the standard exponential. In fact, even if we consider the p-laplacian as our exponent, (while the equation \eqref{met} came from an $L^2$ quadratic interaction, the p-laplacian comes from an $L^p$ interaction ), and have the equation, 
	\begin{equation}
		\partial_t h - \Delta \sinh( - \Delta_p h ) = 0 
	\end{equation}
	We can multiply through this equation by $- \Delta_p h $ and integrate to obtain,
	\begin{equation}
		\frac{1}{p} \frac{d}{dt} \int_{\Omega} \left| \nabla u \right|^p dx + \int_{\Omega} \cosh( - \Delta_p u ) \left| \nabla \Delta_p u \right|^2 dx = 0.
	\end{equation}
	Upon integrating with respect to time and using the fact that $\cosh(s) \geq 1$ we obtain,
	\begin{equation}
		\sup_{ 0 \leq t \leq T } \int_{\Omega} \left| \nabla u \right|^p dx + \int_{\ot} \left| \nabla \Delta_p u \right|^2 dx dt \leq \int_{\Omega} \left| \nabla u_0 \right|^p dx
	\end{equation}
	
	In this article we are interesting in solidifying the differences arising from the exponentials, and we will prove the unique existence of strong solutions to \eqref{i1}-\eqref{i3} in any dimension $N \geq 2$.

	%\newpage
	
	Before we state our main theorem, we give the following definition of a strong solution,
	\begin{defn}\label{def1}
		We say a pair of functions $\left(u,w\right)$ is a strong solution of \ref{i1} - \ref{i3} if the following conditions hold: \\
		(D1) $u\in W^{1,2}(\Omega_T)\cap  L^2\left(0,T;W^{2,2}\left(\Omega\right)\right) $ with $ \partial_t u, \left| \nabla u \right| \in L^{\infty}\left(0,T;L^2(\Omega)\right)$, and $w \in W^{1,2}\left(\Omega_T\right)$ is such that $\sinh\left(w\right) \in L^2\left(0,T;W^{2,2}\left(\Omega\right)\right)$, \\
		(D2) we have,
		\begin{gather}
			\partial_t u - \Delta \sinh\left(w\right) = 0 \; \mbox{ a.e. on } \; \Omega_T, \label{defeq1} \\
			-\Delta u = w \; \mbox{ a.e. on } \; \Omega_T, \label{defeq2} \\
			\nabla u \cdot \nu = \nabla \sinh\left( w\right)  \cdot \nu = 0 \; \mbox{ a.e. on } \; \Sigma_T, \label{defeq3}\\
			u(x,0)=u_0(x), \label{defeq4}
		\end{gather}
		where the initial condition \eqref{defeq4} is satisfied in the space $C\left(\left[0,T\right];L^2\left(\Omega\right)\right)$.
	\end{defn}
	
	Our result is the following,
	
	\begin{thm}[Main Theorem]\label{thm1}
		Assume: \\
		(H1) $\Omega$ is a bounded domain in $\mathbb{R}^N$ which is either convex or has $C^2$ boundary; \\
		(H2) $N \geq 2$; \\
		(H3) $u_0 \in W^{2,2}\left(\Omega\right)$ is such that $\cosh\left(-\Delta u_0 \right) \in L^1(\Omega),\ \sinh\left(-\Delta u_0 \right) \in W^{2,2}\left(\Omega\right)$, and $\nabla u_0\cdot\nu = 0$, $\nabla\sinh(-\Delta u_0)\cdot\nu=0$ on $\partial\Omega$. \\
		Then there is a unique global strong solution to equation \ref{i1} - \ref{i3} in the sense of Definition \ref{def1}.
	\end{thm}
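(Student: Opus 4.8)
\emph{Proof strategy.} The argument is the classical one for exponential parabolic problems: regularize the nonlinearity, derive $n$--independent a priori estimates, and pass to the limit; uniqueness is separate. Since $\sinh$ grows too fast to handle directly, replace it by smooth, odd, strictly increasing, globally Lipschitz functions $g_n$ with $g_n=\sinh$ on $[-n,n]$, $1\le g_n'\le\cosh n$, $|g_n|\le\cosh$, and even antiderivatives $G_n\ge 1$ (mimicking $\cosh$, so that $G_n(s)\ge c\,|s|^k/k!$ for every $k$). After a routine approximation of $u_0$ by data $u_{0,n}$ meeting the compatibility conditions of (H3) with $\int_\Omega G_n(-\Delta u_{0,n})\,dx$ and $\|\Delta g_n(-\Delta u_{0,n})\|_{L^2}$ bounded uniformly, solve
\[
	\partial_t u_n-\Delta g_n(-\Delta u_n)=0 \ \text{ in }\ \ot,\qquad \nabla u_n\cdot\nu=\nabla g_n(-\Delta u_n)\cdot\nu=0 \ \text{ on }\ \Sigma_T,\qquad u_n(\cdot,0)=u_{0,n}.
\]
This is a uniformly parabolic fourth--order equation with bounded coefficients, so a smooth global solution $u_n$ on $[0,T]$ exists by standard methods (Galerkin in the Neumann eigenfunctions of $-\Delta$, or Leray--Schauder); the Lipschitz bound on $g_n$ rules out finite--time blow--up.

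\emph{Uniform estimates.} Set $w_n=-\Delta u_n$ and reproduce, at this level, the three formal computations from the Introduction. Testing with $w_n$ yields $\frac12\frac{d}{dt}\|\nabla u_n\|_{L^2}^2+\int_\Omega g_n'(w_n)|\nabla w_n|^2\,dx=0$, so $\nabla u_n$ is bounded in $L^\infty(0,T;L^2(\Omega))$ and $\nabla w_n$ in $L^2(\ot)$. Testing the identity $\partial_t w_n+\Delta^2 g_n(w_n)=0$ with $g_n(w_n)$ yields $\frac{d}{dt}\int_\Omega G_n(w_n)\,dx+\|\Delta g_n(w_n)\|_{L^2}^2=0$, so $\int_\Omega G_n(w_n)\,dx$ is bounded, $\Delta g_n(w_n)$ is bounded in $L^2(\ot)$, and hence $\partial_t u_n=\Delta g_n(w_n)$ is bounded in $L^2(\ot)$. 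Testing the same identity with $\partial_t g_n(w_n)=g_n'(w_n)\partial_t w_n$ yields $\int_\Omega g_n'(w_n)|\partial_t w_n|^2\,dx+\frac12\frac{d}{dt}\|\Delta g_n(w_n)\|_{L^2}^2=0$, so $\partial_t w_n$ is bounded in $L^2(\ot)$ and $\Delta g_n(w_n)$ in $L^\infty(0,T;L^2(\Omega))$. The decisive consequence --- and the precise point where the hyperbolic sine beats the ordinary exponential --- is that the bound on $\int_\Omega G_n(w_n)\,dx$ together with the evenness of $G_n$ forces $w_n$ to be bounded in $L^\infty(0,T;L^k(\Omega))$ for \emph{every} finite $k$ (for $e^{-\Delta u}$ one would control only the positive part of $-\Delta u$, the source of the measure--valued exponent in \cite{LX}). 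Thus $w_n$ is bounded in $L^2(0,T;\wot)$, and elliptic regularity for the Neumann Laplacian --- where (H1) enters --- bounds $u_n$ in $L^\infty(0,T;\wtt)$ and, using $\|g_n(w_n)\|_{L^1}\le\|G_n(w_n)\|_{L^1}$, bounds $g_n(w_n)$ in $L^\infty(0,T;\wtt)$.

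\emph{Passage to the limit.} Since $w_n$ is bounded in $L^2(0,T;\wot)$ with $\partial_t w_n$ bounded in $L^2(\ot)$, Aubin--Lions gives, along a subsequence, $w_n\to w$ in $L^2(\ot)$ and a.e.; correspondingly $u_n\to u$ with $-\Delta u=w$, $\partial_t u_n\rightharpoonup\partial_t u$ in $L^2(\ot)$, and $\Delta g_n(w_n)\rightharpoonup\chi$ weakly in $L^2(\ot)$. Because $g_n(w_n)$ is bounded in $L^\infty(0,T;\wtt)\hookrightarrow L^\infty(0,T;L^q(\Omega))$ for some $q>1$, the family $\{g_n(w_n)\}$ is uniformly integrable on $\ot$; combined with $g_n(w_n)\to\sinh(w)$ a.e. (from $w_n\to w$ a.e., $g_n\to\sinh$ locally uniformly, and the fact that $\{|w_n|>n\}$ has vanishing measure), Vitali's theorem gives $g_n(w_n)\to\sinh(w)$ in $L^1(\ot)$, whence $\chi=\Delta\sinh(w)\in L^2(\ot)$. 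Passing to the limit in the regularized equation produces \eqref{defeq1}; \eqref{defeq2} is already at hand; the boundary conditions pass by trace continuity; and $u_n\to u$ in $C([0,T];L^2(\Omega))$ recovers \eqref{defeq4}. The regularity in (D1) --- including $w\in W^{1,2}(\ot)$ and $\sinh(w)\in L^2(0,T;\wtt)$ (the latter from $\Delta\sinh(w)\in L^2(\ot)$, $\sinh(w)\in L^\infty(0,T;L^1(\Omega))$, and Neumann elliptic regularity) --- follows from the uniform bounds and weak lower semicontinuity.

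\emph{Uniqueness, and the main difficulty.} Each $\int_\Omega u_i(t)\,dx$ is conserved, so $u_1-u_2$ has zero mean; testing $\partial_t(u_1-u_2)=\Delta(\sinh w_1-\sinh w_2)$ with $u_1-u_2$ gives
\[
	\frac12\frac{d}{dt}\|u_1-u_2\|_{L^2}^2=-\int_\Omega\left(\sinh w_1-\sinh w_2\right)\left(w_1-w_2\right)dx\le 0
\]
by monotonicity of $\sinh$ and $w_i=-\Delta u_i$, so $u_1(0)=u_2(0)$ forces $u_1\equiv u_2$ and hence $w_1\equiv w_2$. The heart of the matter is the limit passage in the exponential term: one must produce estimates strong enough to guarantee \emph{simultaneously} a.e.\ convergence of $w_n$ and uniform integrability of $g_n(w_n)$, and it is exactly the evenness of $\cosh$ --- absent for $e^{-\Delta u}$ --- that upgrades the $L^1$ bound on $\cosh(-\Delta u)$ into $L^k$ control of $-\Delta u$ for all $k$, keeping the exponent in $\wot$ rather than merely a measure. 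A secondary nuisance is choosing the regularizations of $g$ and of $u_0$ so that all three energy identities survive with $n$--independent constants.
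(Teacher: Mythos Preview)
Your proposal is correct in outline and reaches the same conclusion, but it takes a genuinely different route from the paper. The paper does \emph{not} regularize the nonlinearity; instead it uses an implicit time discretization (Rothe scheme). For step size $\tau=T/j$ it recursively solves the \emph{elliptic} system
\[
\frac{u_k-u_{k-1}}{\tau}-\Delta\sinh(w_k)+\tau w_k=0,\qquad -\Delta u_k+\tau u_k=w_k,
\]
with Neumann conditions, proving existence for each step by a Leray--Schauder fixed point argument in $L^\infty(\Omega)$. It then establishes discrete versions of the same three energy identities you use, obtains compactness of the piecewise--linear and piecewise--constant interpolants in $L^2(\ot)$, and passes to the limit $\tau\to0$. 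The trade--off is this: your approach must invoke an existence theory for a fourth--order quasilinear parabolic problem (which you label ``standard'' but do not carry out), whereas the paper reduces everything to second--order elliptic problems for which the $L^\infty$ bootstrap and classical regularity are immediate---at the price of dragging the extra lower--order terms $\tau w_k$ and $\tau u_k$ through all the discrete estimates. Both routes hinge on the identical three formal identities from the introduction, and both exploit the key point that $\cosh\ge1$ turns the $L^1$ bound on $\cosh(w)$ into genuine Sobolev control of $w$.

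One small slip: your claim that $G_n(s)\ge c\,|s|^k/k!$ for \emph{every} $k$, uniformly in $n$, cannot hold once $g_n$ is globally Lipschitz, since then $G_n$ grows at most quadratically at infinity. Fortunately you only actually use $k=2$ (to get $w_n\in L^\infty(0,T;L^2(\Omega))$ and hence $u_n\in L^\infty(0,T;\wtt)$), and that case is fine because $G_n''=g_n'\ge1$ forces $G_n(s)\ge1+s^2/2$. Alternatively, since $\int_\Omega w_n\,dx=0$ and $\nabla w_n\in L^2(\ot)$, Poincar\'e already gives $w_n\in L^2(0,T;\wot)$ without invoking $G_n$ at all---which is closer to how the paper proceeds.
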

	
	By a global strong solution, we mean that for each $T>0$ there is a strong solution $u$ to (1.1)-(1.3) on $\Omega_T$. 
	
	Our result here indicates that the two principal nonlinear terms in equation (1.1) have a balancing effect. Indeed, according to  \cite{LX},
	%In view of To be more specific, 
	the term $e^{-\Delta u}$ can behave well even if $-\Delta u$ is a Radon measure with the support of its singular part contained in the set where the absolutely continuous part is negative infinity. This suggests that  good estimates on $e^{-\Delta u}$ do not control what may happen on the set $\{\Delta u=\infty\}$. Naturally, one would expect that estimates for the second nonlinear term $e^{\Delta u}$ in our equation could make up for this.
	% on a set of Lebesgue measure zero \cite{LX}.
	Our analysis here shows that this is exactly what has happened.   Note that measure exponents do not arise in the one-dimensional case. See \cite{GLL2,GLLM}.

	%prevents this from happening.
	
	The uniqueness assertion in Theorem \ref{thm1} is simple. We will prove it now. Let $v$ be a second solution of \ref{defeq1} - \ref{defeq4}. Then we have,
	\begin{equation}
		\frac{\partial}{\partial t} \left( u - v \right) = \Delta\sinh(-\Delta u) - \Delta\sinh(-\Delta v).
	\end{equation}
	Multiply through this equation by $u-v$ and then integrate the resulting equation over $\Omega$ to get, 
	\begin{equation}
		\begin{aligned}
			\frac{1}{2} \frac{d}{dt} \int_{\Omega} \left( u - v\right)^2 dx &= \int_{\Omega} \left( \Delta\sinh(-\Delta u) - \Delta\sinh(-\Delta v) \right) \left( u - v \right) dx \\
			& = - \int_{\Omega}\left( \sinh(-\Delta u) - \sinh(-\Delta v) \right) \left( -\Delta u - (-\Delta v) \right) dx \\
			& \leq 0.
		\end{aligned}
	\end{equation}
	Here we have used the fact that $\sinh$ is an increasing function. Integrate the above inequality with respect to $t$ to complete the proof.
	
	\subsection{A-Priori Estimates for smooth solutions}
	
	The core of our approach to this problem lies in the following a-priori estimates, which resemble those in \cite{LX}. Towards our first estimate we square both sides of \ref{defeq1} and integrate over $\Omega$ to obtain,
	\begin{equation}
		\int_{\Omega} \left( \partial_t u \right)^2 dx - 2 \int_{\Omega} \partial_t u \Delta \sinh\left(w\right) dx + \int_{\Omega} \left( \Delta \sinh\left(w\right)\right)^2 dx = 0.
	\end{equation}
	For the second integral in the above equation we have,
	\begin{align}
		-2\int_{\Omega} \partial_t u \Delta \sinh \left(w\right) dx & = -2 \int_{\Omega} \partial_t \Delta u \sinh\left(w\right) dx \nonumber\\
		&= 2 \frac{d}{dt} \int_{\Omega} \cosh\left(w\right) dx.
	\end{align}
	Putting this back into the previous equation we get,
	\begin{equation}
		\int_{\Omega} \left( \partial_t u \right)^2 dx + 2 \frac{d}{dt} \int_{\Omega} \cosh \left( w \right) dx + \int_{\Omega} \left( \Delta \sinh \left( w \right) \right)^2 dx = 0.
	\end{equation}
	Upon integrating with respect to time, we obtain our first estimate,
	\begin{equation}
		\begin{split}
			\int_{\Omega_T}  \left( \partial_t u \right)^2 dxdt + 2 \sup_{ 0 \leq t \leq T } \int_{\Omega}  \cosh \left( w \right) dx \\
			+ \int_{\Omega_T} \left( \Delta \sinh \left( w \right) \right)^2 dx dt 
			\leq 2 \int_{\Omega} \cosh \left( - \Delta u_0 \right) dx \label{apest1}
		\end{split}
	\end{equation}
	Next, we take the gradient of both sides of equation \ref{defeq1}, and then we take the dot product of the resulting equations with $\nabla u$ and finally integrate over $\Omega$ to get,
	\begin{equation}
		\frac{1}{2} \frac{d}{dt} \int_{\Omega} \left\lvert \nabla u \right\rvert^2 dx - \int_{\Omega} \nabla \left( \Delta \sinh \left( w \right) \right) \cdot \nabla u dx = 0. \label{hlp1}
	\end{equation}
	For the second integral in this equation we have,
	\begin{align*}
		- \int_{ \Omega } \nabla \left( \Delta \sinh \left( w \right) \right) \cdot \nabla u dx & = - \int_{ \Omega } \nabla \sinh \left( w \right) \cdot \nabla \Delta u dx \\
		& = \int_{ \Omega } \cosh \left( w \right) \left\lvert \nabla w \right\rvert^2 dx \\
		& \geq \int_{ \Omega } \left\lvert \nabla w \right\rvert^2 dx.
	\end{align*}
	Putting this back into \ref{hlp1}, and then integrating with respect to $t$ gives us our second estimate,
	\begin{equation}
		\frac{1}{2} \sup_{ 0 \leq t \leq T } \int_{\Omega} \left\lvert \nabla u \right\rvert^2 dx + \int_{\Omega_T} \left\lvert \nabla w \right\rvert^2 dxdt \leq \int_{\Omega} \left\lvert \nabla u_0 \right\rvert^2 dx. \label{apest2}
	\end{equation}
	Towards our third estimate we first take the derivative of \ref{defeq1} with respect to $t$. Then we multiply through the resulting equation by $\partial_t u$ and integrate over $\Omega$ to obtain,
	\begin{equation}
		\frac{1}{2} \frac{d}{dt} \int_{\Omega} \left( \partial_t u \right)^2 dx - \int_{ \Omega } \partial_t \Delta \sinh \left( w \right) \partial_t u dx = 0.
	\end{equation}
	For the second integral in this equation we calculate,
	\begin{align*}
		- \int_{ \Omega } \partial_t \Delta \sinh \left( w \right) \partial_t u dx & = - \int_{ \Omega } \partial_t \sinh \left( w \right) \partial_t \Delta u dx \\
		& = \int_{ \Omega } \partial_t \sinh \left( w \right) \partial_t w dx \\
		& = \int_{ \Omega } \cosh \left( w \right) \left( \partial_t w \right)^2 dx \\
		& \geq \int_{ \Omega } \left( \partial_t w \right)^2 dx.
	\end{align*}
	Putting this into the original equation and then integrating with respect to $t$ gives our third estimate,
	\begin{equation}
		\frac{1}{2} \sup_{ 0 \leq t \leq T } \int_{ \Omega } \left( \partial_t u \right)^2 dx + \int_{ \Omega_T } \left( \partial_t w \right)^2 dxdt \leq \int_{\Omega} \left( \Delta \sinh \left( - \Delta u_0 \right) \right)^2 dx. \label{apest3}
	\end{equation}
	
	Finally, we integrate \eqref{i1} with respect to $x$ over $\Omega$ to get
	$$\frac{d}{dt}\int_{ \Omega }u(x,t)dx=0,$$
	from whence follows
	\begin{equation}
		\int_{ \Omega }u(x,t)dx=\int_{ \Omega }u_0(x)dx.
	\end{equation}
	Similarly, we can deduce from \eqref{defeq2} that
	\begin{equation}
		\int_{ \Omega }w(x,t)dx=0.
	\end{equation}
	
	We shall see that the preceding a-priori estimates combined with relevant interpolation inequalities for Sobolev spaces and Lemma \ref{lwt} below imply (D1) in the definition \ref{def1}.

	A solution to \ref{i1} - \ref{i3} will be constructed as the limit of a sequence of approximate solutions. In section 2 we will present our approximate problems, and we establish the existence of a classical solution for these problems. We then form a sequence of approximate solutions based upon implicit discretization in the time variable. Section 3 is then devoted to the proof of the discrete versions of the estimates obtained in Subsection 1.2. These estimates are then shown to be enough to justify passing to the limit.
	
	\vskip2mm
	
	\section{Approximate Problems}
	In this section we will prove the existence of solutions to our approximate problems. Before we present our approximate problems, we will state a few preparatory lemmas, which will be used throughout the article. 
	
	We begin by detailing the relevant spaces of functions which will be used throughout. For $p\geq1$, we will let $L^p(\Omega)$ denote the Banach space of measurable functions on $\Omega$ that are p-integrable. We then denote by $W^{k.p}(\Omega)$ the Banach space of functions $f \in L^p(\Omega)$ so that the weak derivatives $D^{\alpha} f \in L^p(\Omega)$ for all multi-indices $\alpha$ with $\left| \alpha \right| \leq k$. We will also denote the space of H\"older continuous functions with exponent $\beta$ via $C^{\beta}\left(\overline{ \Omega} \right)$ for some $\beta \in (0,1)$.

	\begin{lemma}\label{lwt}
		Let $\Omega$ be a bounded domain in $\mathbb{R}^N$. If $ \Omega $ is convex, then,
		\begin{equation}
			\int_{\Omega} \left( \Delta u \right)^2 dx \geq \int_{\Omega} \left\lvert \nabla^2 u \right\rvert^2 dx
		\end{equation}
		for all $ u \in W^{2,2} \left( \Omega \right) $ with $ \nabla u \cdot \nu = 0 $ on $ \partial \Omega $.
		If $\po$ is $C^2$, then there is a positive constant $c$
		depending only on $N, \Omega$, and the smoothness of the boundary
		such that
		\begin{equation}
			\int_\Omega(\Delta u)^2dx+\int_\Omega|\nabla
			u|^2dx\geq
			c\int_\Omega|\nabla^2u|^2
			dx \ \
		\end{equation}
		for  all $u\in W^{2,2}(\Omega)$ with $\nabla u\cdot\nu=0$ on
		$\po$.
	\end{lemma}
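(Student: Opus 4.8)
The plan is to reduce the lemma to the classical integration-by-parts identity
\begin{equation*}
	\int_\Omega \bigl((\Delta u)^2 - |\nabla^2 u|^2\bigr)\,dx = \int_{\po} \mathrm{II}(\nabla u,\nabla u)\,dS ,
\end{equation*}
valid for $u\in C^\infty(\overline\Omega)$ with $\nabla u\cdot\nu=0$ on $\po$, where $\mathrm{II}(X,Y)=(D_X\nu)\cdot Y$ is the second fundamental form of $\po$ with respect to the outward normal (so $\mathrm{II}\geq0$ when $\Omega$ is convex, as one checks on a ball), and then to treat the two cases by density.

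To prove the identity, I would write $(\Delta u)^2-|\nabla^2u|^2=\sum_{i,j}(u_{ii}u_{jj}-u_{ij}u_{ij})$ and integrate by parts once in each summand, moving a derivative off the first factor. The two interior terms are both $-\int_\Omega u_iu_{ijj}\,dx$ and cancel, leaving
\begin{equation*}
	\int_\Omega \bigl((\Delta u)^2 - |\nabla^2 u|^2\bigr)\,dx = \int_{\po}(\partial_\nu u)(\Delta u)\,dS - \int_{\po}\nabla u\cdot\partial_\nu(\nabla u)\,dS ,
\end{equation*}
where $\partial_\nu(\nabla u)$ denotes the vector with components $\sum_j\nu_ju_{ij}$. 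The first boundary term vanishes since $\partial_\nu u=\nabla u\cdot\nu=0$. For the second, I would observe that $\nabla u$ is tangent to $\po$ there, so only the tangential part of $\partial_\nu(\nabla u)$ contributes to the inner product; differentiating the boundary relation $\nabla u\cdot\nu=0$ along an arbitrary tangent vector $T$ and using the symmetry of $\nabla^2u$ gives $\partial_\nu(\nabla u)\cdot T=\nabla^2u(\nu,T)=-\nabla u\cdot(D_T\nu)=-\mathrm{II}(\nabla u,T)$, whence $\nabla u\cdot\partial_\nu(\nabla u)=-\mathrm{II}(\nabla u,\nabla u)$ on $\po$, which yields the identity.

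For the convex case, $\mathrm{II}\geq0$ on $\po$, so the identity gives $\int_\Omega(\Delta u)^2\,dx\geq\int_\Omega|\nabla^2u|^2\,dx$ for smooth $u$; approximating (exhausting $\Omega$ by smooth convex domains and then approximating $u$, or directly approximating $u$ in $W^{2,2}$ by smooth functions satisfying the Neumann condition) extends it to all $u\in W^{2,2}(\Omega)$ with $\nabla u\cdot\nu=0$. For the $C^2$ case the principal curvatures are continuous on the compact set $\po$, so $|\mathrm{II}|\leq C_0=C_0(N,\Omega)$ there, and the identity only yields $\int_\Omega|\nabla^2u|^2\,dx\leq\int_\Omega(\Delta u)^2\,dx+C_0\int_{\po}|\nabla u|^2\,dS$. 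I would then absorb the boundary term using the trace-interpolation inequality
\begin{equation*}
	\int_{\po}v^2\,dS\leq\varepsilon\int_\Omega|\nabla v|^2\,dx+C_\varepsilon\int_\Omega v^2\,dx,\qquad v\in W^{1,2}(\Omega),
\end{equation*}
which follows by choosing $\xi\in C^1(\overline\Omega;\mathbb{R}^N)$ with $\xi\cdot\nu\equiv1$ on $\po$, writing $\int_{\po}v^2\,dS=\int_\Omega\operatorname{div}(v^2\xi)\,dx$, and applying Young's inequality. Taking $v=\partial_iu$ and summing over $i$, then choosing $\varepsilon=1/(2C_0)$ to absorb $C_0\varepsilon\int_\Omega|\nabla^2u|^2\,dx$ into the left-hand side, gives $\int_\Omega|\nabla^2u|^2\,dx\leq C\bigl(\int_\Omega(\Delta u)^2\,dx+\int_\Omega|\nabla u|^2\,dx\bigr)$, which is the asserted inequality with $c=1/C$; a density argument again removes the smoothness assumption on $u$.

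The integration by parts and the trace inequality are routine. The one delicate point I expect is the approximation step in the convex case: a convex domain need not have $C^2$ boundary, so the displayed identity is not literally available, and one must justify either that $\Omega$ can be exhausted by smooth convex domains along which the inequality passes to the limit, or that $C^\infty(\overline\Omega)$ functions with $\partial_\nu u=0$ are dense in $\{u\in W^{2,2}(\Omega):\nabla u\cdot\nu=0\}$. I would handle this by domain exhaustion together with a diagonal argument.
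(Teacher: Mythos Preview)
Your argument is the standard Grisvard-type derivation and is correct: the key identity
\[
\int_\Omega\bigl((\Delta u)^2-|\nabla^2u|^2\bigr)\,dx=\int_{\po}\mathrm{II}(\nabla u,\nabla u)\,dS
\]
for Neumann data, the sign of $\mathrm{II}$ on convex domains, and the trace-interpolation absorption in the $C^2$ case are all handled properly. Your caveat about the approximation step in the merely convex case is the right thing to flag; exhaustion by smooth convex subdomains is the usual remedy.

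As for comparison: the paper does \emph{not} prove this lemma. Immediately after the statement it simply writes ``For more information on this lemma we refer the reader to \cite{X.Xu}.'' So there is no in-paper argument to compare against; you have supplied a complete proof where the authors only gave a citation.
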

	
	For more information on this lemma we refer the reader to \cite{X.Xu}.
	
	Next, we present some relevant interpolation inequalities for Sobolev spaces,
	\begin{lemma}\label{l22}
		Let $ \Omega $ be a bounded domain in $\mathbb{R}^N$. Then,\\
		(1) $\left\lVert f \right\rVert_{q,\Omega} \leq \epsilon \left\lVert f \right\rVert_{r,\Omega} + \epsilon^{\sigma} \left\lVert f \right\rVert_{p,\Omega}$, where $\epsilon > 0, \; p \leq q < r$, and $\sigma = \left( \frac{1}{p} - \frac{1}{q} \right)/\left(\frac{1}{q} - \frac{1}{r}\right)$;\\
		(2) If $\partial\Omega$ is $C^2$, for each $ \epsilon > 0 $ and each $ p \in [2, 2^{*} )$, where $2^{*} = \frac{2N}{N-2}$ if $ N > 2 $ and any number bigger than $2$ if $ N = 2 $, there is a positive number $ c = c(\epsilon, p)$ such that,
		\begin{equation}
			\begin{aligned}
				\left\lVert f \right\rVert_p &\leq \epsilon \left\lVert \nabla f \right\rVert_2 + c \left\lVert f \right\rVert_1, \\
				\left\lVert \nabla g \right\rVert_p &\leq \epsilon \left\lVert \nabla^2 g \right\rVert_2 + c \left\lVert g \right\rVert_1,
			\end{aligned}
		\end{equation}
		for all $ f \in \wot $ and $ g \in  \wtt $.
	\end{lemma}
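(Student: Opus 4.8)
The plan is to deduce everything from three classical ingredients: the elementary $L^p$-interpolation (Hölder) inequality, the Gagliardo--Nirenberg inequality on a $C^2$ domain, and the standard interpolation between $W^{2,2}(\Omega)$ and $L^2(\Omega)$. The only delicate point is bookkeeping the constants so that the coefficient of the top-order term is an arbitrary $\epsilon>0$; this is precisely why $p$ is required to stay \emph{strictly} below the Sobolev exponent $2^\ast$.

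For part (1) there are no derivatives, so I would argue purely with norms. Since $p\le q<r$, choose $\theta\in[0,1)$ with $\frac1q=\frac{1-\theta}{p}+\frac{\theta}{r}$; Hölder's inequality gives
\begin{equation*}
	\|f\|_{q,\Omega}\le\|f\|_{p,\Omega}^{1-\theta}\,\|f\|_{r,\Omega}^{\theta}.
\end{equation*}
A direct computation from $\frac1p-\frac1r=\bigl(\frac1p-\frac1q\bigr)+\bigl(\frac1q-\frac1r\bigr)$ shows $\frac{\theta}{1-\theta}=\sigma$, and then Young's inequality with the conjugate exponents $\frac1\theta$ and $\frac1{1-\theta}$, applied to the product $\|f\|_{p,\Omega}^{1-\theta}\|f\|_{r,\Omega}^{\theta}$ with a free scaling parameter, yields the stated inequality after renaming constants (the case $p=q$ being trivial with $\theta=0$).

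For the first estimate in part (2) I would invoke the Gagliardo--Nirenberg inequality on the bounded $C^2$ domain $\Omega$ with $j=0$, $m=1$, and exponents $r=2$, $q=1$: for $p\in[2,2^\ast)$ there is $a\in(0,1)$ with $\frac1p=a\bigl(\frac12-\frac1N\bigr)+1-a$ and
\begin{equation*}
	\|f\|_{p}\le C\,\|\nabla f\|_{2}^{a}\,\|f\|_{1}^{1-a}+C\,\|f\|_{1}.
\end{equation*}
The condition $a<1$ is equivalent to the strict inequality $p<2^\ast$, and this is exactly what lets Young's inequality convert $C\|\nabla f\|_2^{a}\|f\|_1^{1-a}$ into $\epsilon\|\nabla f\|_2+c\|f\|_1$, proving the first line of (2). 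For the gradient estimate I would apply the first line componentwise to $h=\nabla g$, obtaining $\|\nabla g\|_p\le\epsilon\|\nabla^2 g\|_2+c\|\nabla g\|_1\le\epsilon\|\nabla^2 g\|_2+c\|\nabla g\|_2$, so it remains to bound $\|\nabla g\|_2$ (the case $p=2$ of the gradient estimate is the borderline one). Here I would combine the standard interpolation inequality $\|\nabla g\|_2\le\epsilon\|\nabla^2 g\|_2+c_\epsilon\|g\|_2$, valid on $C^2$ domains, with the $p=2$ case of the first line, $\|g\|_2\le\epsilon'\|\nabla g\|_2+c\|g\|_1$, and choose $\epsilon'$ small relative to $c_\epsilon$ so that the resulting $\|\nabla g\|_2$ term is absorbed into the left-hand side; chaining the estimates gives the second line of (2).

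The main obstacle is therefore not any isolated deep step but the need to carry through this chain of absorptions while keeping the highest-order coefficient arbitrarily small; this is what forces $p<2^\ast$ to be strict and makes the two-step bootstrap for $\|\nabla g\|_2$ necessary. All the underlying inequalities are classical; see \cite{X.Xu} and the references given there.
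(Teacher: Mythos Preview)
The paper does not prove this lemma; it is stated as a collection of standard interpolation inequalities and left without argument. Your proof is correct and follows the usual route: H\"older plus Young's inequality with a free scaling parameter for part~(1); Gagliardo--Nirenberg on a $C^2$ domain plus Young for the first line of part~(2), where the hypothesis $p<2^\ast$ is exactly what gives the interpolation exponent $a<1$ needed for the absorption; and, for the second line, componentwise application of the first line to $\nabla g$ followed by the intermediate-derivative interpolation $\|\nabla g\|_2\le\epsilon\|\nabla^2 g\|_2+c_\epsilon\|g\|_2$ and the $p=2$ case of the first line, with $\epsilon'$ chosen small enough to reabsorb the $\|\nabla g\|_2$ term. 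One remark on part~(1): the computation you outline actually produces $\epsilon\|f\|_{r,\Omega}+\epsilon^{-\sigma}\|f\|_{p,\Omega}$, not $\epsilon^{\sigma}$ as printed in the statement (indeed, $\epsilon^{\sigma}$ with $\sigma\ge0$ would be absurd, since letting $\epsilon\to0$ would force $\|f\|_{q,\Omega}=0$). So ``after renaming constants'' should really read ``after correcting the evident sign typo in the exponent.''
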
 
	
	Our existence assertion is based on the following fixed point theorem, which is often called the Leray-Schauder Theorem. (\cite{GT}, p.280) 
	\begin{lemma}
		Let $B$ be a map from a Banach space $\mathbb{X}$ into itself. Assume:\\
		(1) $B$ is continuous,\\
		(2) the images of bounded sets under $B$ are precompact,\\
		(3) there exists a constant $c$ so that,
		\begin{equation}
			\left\lVert z \right\rVert_{ \mathbb{X} } \leq c
		\end{equation}
		for all $ z \in \mathbb{X}$, and $ \sigma \in \left( 0, 1 \right) $ satisfying,
		\begin{equation}
			z = \sigma B(z).
		\end{equation}
		Then $B$ has a fixed point.
	\end{lemma}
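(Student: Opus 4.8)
The plan is to derive this statement from Schauder's fixed point theorem (in the form: a continuous self-map of a closed convex subset of a Banach space with precompact image has a fixed point, see \cite{GT}) by the classical radial-retraction truncation. Fix any constant $M>c$, with $c$ as in hypothesis (3), and set $\overline{B}_M=\{z\in\mathbb{X}:\|z\|_{\mathbb{X}}\le M\}$, which is nonempty, closed, bounded, and convex. Define the radial retraction $R:\mathbb{X}\to\overline{B}_M$ by $R(z)=z$ when $\|z\|_{\mathbb{X}}\le M$ and $R(z)=Mz/\|z\|_{\mathbb{X}}$ when $\|z\|_{\mathbb{X}}>M$; the two branches agree on the sphere $\|z\|_{\mathbb{X}}=M$, so $R$ is continuous (indeed Lipschitz) on all of $\mathbb{X}$.

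Next I would consider the composite $T=R\circ B:\overline{B}_M\to\overline{B}_M$. It is continuous as a composition of continuous maps. Since $\overline{B}_M$ is bounded, hypothesis (2) gives that $B(\overline{B}_M)$ is precompact, so its closure $\overline{B(\overline{B}_M)}$ is compact; as the continuous image of a compact set, $R\big(\overline{B(\overline{B}_M)}\big)$ is compact and contains $T(\overline{B}_M)$. Thus $T$ maps the closed convex set $\overline{B}_M$ continuously into a compact subset of itself, and Schauder's theorem produces $z_0\in\overline{B}_M$ with $z_0=T(z_0)=R(B(z_0))$.

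Finally I would show $z_0$ is an honest fixed point of $B$ by eliminating the nontrivial branch of $R$. If $\|B(z_0)\|_{\mathbb{X}}\le M$, then $R(B(z_0))=B(z_0)$, so $z_0=B(z_0)$ and we are done. Otherwise $\|B(z_0)\|_{\mathbb{X}}>M$, and then $z_0=M\,B(z_0)/\|B(z_0)\|_{\mathbb{X}}$, i.e. $z_0=\sigma B(z_0)$ with $\sigma=M/\|B(z_0)\|_{\mathbb{X}}\in(0,1)$. Hypothesis (3) then forces $\|z_0\|_{\mathbb{X}}\le c$, whereas the displayed formula gives $\|z_0\|_{\mathbb{X}}=M>c$, a contradiction; so this case cannot occur. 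Hence $B$ has a fixed point.

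There is no genuine obstacle here beyond bookkeeping; the only points deserving a line of care are the continuity of $R$ on all of $\mathbb{X}$ (which holds in every normed space even though $R$ need not be nonexpansive outside a Hilbert space) and the fact that $T(\overline{B}_M)$ lies in a compact subset of $\overline{B}_M$ rather than merely being precompact in $\mathbb{X}$ — this is precisely why one composes with the retraction instead of working with $B$ directly. It is also essential that the contradiction is obtained in the \emph{strict} regime $\|B(z_0)\|_{\mathbb{X}}>M$, since hypothesis (3) only bounds solutions of $z=\sigma B(z)$ for $\sigma\in(0,1)$, not for $\sigma=1$.
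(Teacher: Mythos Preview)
Your proof is correct and is the standard derivation of the Leray--Schauder alternative from Schauder's fixed point theorem via the radial retraction; each step (continuity of $R$, precompactness of $T(\overline{B}_M)$, and the contradiction in the case $\|B(z_0)\|_{\mathbb{X}}>M$) is handled properly.

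The paper, however, does not prove this lemma at all: it merely states it and cites Gilbarg--Trudinger (\cite{GT}, p.~280) as a reference. So there is no proof in the paper to compare against; you have supplied a complete argument where the authors were content to quote the result from the literature.
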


	Next we collect a few useful elementary inequalities.
	
	\begin{lemma}
		(1) If $f$ is an increasing function and $F$ an anti-derivative of $f$, then,
		\begin{equation}
			f\left(s\right) \left( s-t \right) \geq F(s) - F(t).
		\end{equation}
		(2) for $a,b\in[0, \infty)$ there hold
		\begin{equation}
			\begin{aligned}
				\left( a + b \right)^{\alpha} &\leq a^{\alpha} + b^{\alpha} \; \mbox{ if } 0 < \alpha \leq 1, \\
				\left( a + b \right)^{\alpha} &\leq 2^{\alpha - 1} \left(  a^{\alpha} + b^{\alpha}\right) \; \mbox{ if } \; \alpha > 1.
			\end{aligned}
		\end{equation}
	\end{lemma}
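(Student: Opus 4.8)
The plan is to dispatch both parts by appealing to convexity and monotonicity of the relevant scalar functions; neither requires more than a line or two.

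For (1), I would note that since $f$ is increasing and $F'=f$, the function $F$ is convex, so its graph lies above the support line at $s$:
\begin{equation*}
	F(t) \ge F(s) + f(s)(t-s) \qquad \text{for all } s,t,
\end{equation*}
which is exactly the claimed inequality after rearranging. If one prefers to avoid invoking convexity for a merely increasing (possibly discontinuous) $f$, the same bound follows from the fundamental theorem of calculus applied to $F(s)-F(t)=\int_t^s f(\tau)\,d\tau$: when $s\ge t$ the monotonicity gives $f(\tau)\le f(s)$ on $[t,s]$, hence $\int_t^s f(\tau)\,d\tau\le f(s)(s-t)$; when $s<t$ we have $f(\tau)\ge f(s)$ on $[s,t]$, hence $F(s)-F(t)=-\int_s^t f(\tau)\,d\tau\le -f(s)(t-s)=f(s)(s-t)$. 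In the applications ($f=\sinh$ with $F=\cosh$, or the scalar nonlinearity from the $p$-Laplacian) $f$ is smooth, so either route is immediate.

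For (2), the case $\alpha\in(0,1]$ is trivial when $a+b=0$; otherwise put $\lambda=a/(a+b)\in[0,1]$, so $1-\lambda=b/(a+b)$, and use that $x^\alpha\ge x$ on $[0,1]$ when $\alpha\le1$ to obtain $\lambda^\alpha+(1-\lambda)^\alpha\ge\lambda+(1-\lambda)=1$; multiplying through by $(a+b)^\alpha$ gives $(a+b)^\alpha\le a^\alpha+b^\alpha$. For $\alpha>1$, convexity of $x\mapsto x^\alpha$ on $[0,\infty)$ yields $\bigl(\tfrac{a+b}{2}\bigr)^\alpha\le\tfrac12(a^\alpha+b^\alpha)$, and multiplying by $2^\alpha$ gives $(a+b)^\alpha\le 2^{\alpha-1}(a^\alpha+b^\alpha)$. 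There is no genuine obstacle here; the only points deserving a word of care are the degenerate case $a+b=0$ in (2) and the regularity of $f$ in (1), both handled above. I record these now because they are used repeatedly later: part (1) with $f=\sinh$, $F=\cosh$ to absorb the time-increment terms in the discrete energy identities of Section 3, and part (2) to split powers of sums generated by the interpolation inequalities of Lemma \ref{l22}.
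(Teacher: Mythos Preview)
Your proof is correct. The paper itself does not supply a proof of this lemma at all; it merely states the inequalities as ``useful elementary inequalities'' and moves on, so your argument is already more than what the paper offers. Your convexity/support-line observation for (1) and the normalization plus $x^\alpha\ge x$ trick (respectively convexity of $x\mapsto x^\alpha$) for (2) are the standard one-line justifications and match exactly how part (1) is applied later in the paper (e.g.\ with $f=\sinh$, $F=\cosh$ in the discrete energy estimates).
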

	
	Finally we will also make use of the following well known estimate, 
	\begin{lemma}\label{l26}
		Let $w \in\wot$ be a weak solution of the boundary value problem,
		\begin{equation}
			\begin{aligned}
				- \Delta w & = f \;\; \mbox{ in } \; \Omega, \\
				\nabla w \cdot \nu & = 0 \;\; \mbox{ on } \; \partial \Omega.
			\end{aligned}
		\end{equation}
		Then for each $p > \frac{N}{2}$ there is a positive number $c=c\left(N,p,\Omega\right)$ so that,
		\begin{equation}
			\left\lVert w \right\rVert_{\infty,\Omega} \leq c \left\lVert w \right\rVert_{1,\Omega} + c \left\lVert f \right\rVert_{p,\Omega}.
		\end{equation}
	\end{lemma}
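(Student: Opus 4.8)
The estimate is a standard piece of elliptic regularity, and I would prove it by a truncation argument. First I would reduce to the mean-zero case: set $\bar w=\frac{1}{|\Omega|}\int_\Omega w\,dx$ and $v=w-\bar w$. Testing the equation with the constant function $1$ shows that the compatibility condition $\int_\Omega f\,dx=0$ holds, and $v$ solves the same Neumann problem with the same datum $f$. Since
\[
\|w\|_{\infty,\Omega}\le\|v\|_{\infty,\Omega}+|\bar w|\le\|v\|_{\infty,\Omega}+|\Omega|^{-1}\|w\|_{1,\Omega},
\]
it suffices to show $\|v\|_{\infty,\Omega}\le c\|f\|_{p,\Omega}$.

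Next I would record a base energy bound. Using $v$ itself as a test function gives $\int_\Omega|\nabla v|^2\,dx=\int_\Omega fv\,dx\le\|f\|_{(2^{*})',\Omega}\|v\|_{2^{*},\Omega}$, and since $v$ has zero mean the Poincar\'e--Wirtinger inequality yields $\|v\|_{2^{*},\Omega}\le c\|\nabla v\|_{2,\Omega}$, whence $\|\nabla v\|_{2,\Omega}\le c\|f\|_{(2^{*})',\Omega}$. Because $N\ge 2$ one has $(2^{*})'=\frac{2N}{N+2}\le\frac{N}{2}<p$, so $\|f\|_{(2^{*})',\Omega}\le c\|f\|_{p,\Omega}$ and therefore
\[
\|v\|_{2^{*},\Omega}+\|\nabla v\|_{2,\Omega}\le c\|f\|_{p,\Omega}.
\]

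Now I would run a De Giorgi--Stampacchia iteration on the superlevel sets. For $k\ge 0$ put $A(k)=\{x\in\Omega:v(x)>k\}$ and test the equation with $(v-k)_{+}$, obtaining $\int_\Omega|\nabla(v-k)_{+}|^{2}\,dx=\int_{A(k)}f(v-k)\,dx$. Bounding the right-hand side by H\"older and the Sobolev inequality of Lemma \ref{l22} applied to $(v-k)_{+}$ --- which, for $k$ above a median of $v$, vanishes on a set of measure $\ge\frac12|\Omega|$, so that $\|(v-k)_{+}\|_{2^{*},\Omega}\le c\|\nabla(v-k)_{+}\|_{2,\Omega}$ --- and then restricting the left side to $A(h)$, where $(v-k)_{+}\ge h-k$, leads to a recursion of the form
\[
|A(h)|\le\frac{c\,\|f\|_{p,\Omega}^{\,2^{*}}}{(h-k)^{2^{*}}}\,|A(k)|^{1+\delta},\qquad h>k\ge k_{0},
\]
where the crucial point is that $\delta=\delta(N,p)$ is strictly positive precisely when $p>\frac{N}{2}$ (a short computation with the H\"older exponents gives $\delta=2^{*}\bigl(1-\frac{1}{p}\bigr)-2$). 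The standard lemma on such recursions then forces $v\le k_{0}+c\|f\|_{p,\Omega}$ a.e., where $k_{0}$, being a median of $v=w-\bar w$, is itself bounded by $c|\Omega|^{-1}\|w\|_{1,\Omega}$ (and by $c\|f\|_{p,\Omega}$ via the previous step). Applying the same argument to $-v$ bounds $-v$ from above; together these give $\|v\|_{\infty,\Omega}\le c\|f\|_{p,\Omega}$, which with the first paragraph completes the proof. (Alternatively one could represent $v(x)=\int_\Omega G(x,y)f(y)\,dy$ through the Neumann function $G$, which satisfies $|G(x,y)|\le c|x-y|^{2-N}$ for $N\ge 3$ and a logarithmic bound for $N=2$; since $|x-\cdot|^{2-N}\in L^{p'}(\Omega)$ uniformly in $x$ exactly when $p'(N-2)<N$, i.e.\ $p>\frac{N}{2}$, H\"older's inequality gives the claim directly.)

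The one place demanding care is the iteration step. Because the boundary condition is Neumann rather than Dirichlet, the truncations $(v-k)_{+}$ need not vanish on $\partial\Omega$, so the pure Sobolev embedding is unavailable; instead one must center $v$ (here by a median) and invoke the Poincar\'e--Sobolev inequality valid for functions vanishing on a fixed proportion of $\Omega$, and one must keep track of the H\"older exponents in order to see that the threshold $p>\frac{N}{2}$ is exactly what makes the recursion supercritical ($\delta>0$), so that the De Giorgi lemma applies. The remaining manipulations are routine.
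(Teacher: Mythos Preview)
The paper does not actually prove this lemma; it is introduced there only as ``the following well known estimate'' and is cited without argument. So there is no paper proof to compare against.

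Your proposal is a correct, standard proof. The reduction to the mean-zero part, the base energy estimate, and the De Giorgi--Stampacchia iteration on the level sets $A(k)=\{v>k\}$ all go through, and your computation of the recursion exponent $\delta=2^{*}(1-1/p)-2$ correctly identifies $p>N/2$ as exactly the threshold making the iteration supercritical. Your handling of the Neumann (rather than Dirichlet) setting --- centering by a median so that $(v-k)_{+}$ vanishes on at least half of $\Omega$, which licenses the Poincar\'e--Sobolev inequality $\|(v-k)_{+}\|_{2^{*}}\le c\|\nabla(v-k)_{+}\|_{2}$ --- is the right device. One minor point: the inequality you invoke here is not literally the content of the paper's Lemma~\ref{l22} (which is a subcritical interpolation inequality with an $\|f\|_1$ term), but it is of course a classical fact in its own right; your parenthetical Neumann--Green-function alternative is equally valid and perhaps cleaner when $\partial\Omega\in C^{2}$.
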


	We now introduce our approximate problems. The approximation scheme is based on an implicit time discretization. We let $ \tau > 0 $, and $ v \in L^{ \infty } \left( \Omega \right) $ be given,and we consider the following boundary-value problem,
	
	\begin{gather}
		\frac{ u - v }{\tau} - \Delta \sinh \left( w \right) + \tau w = 0 \;\; \mbox{ on } \; \Omega, \label{app1} \\
		- \Delta u + \tau u = w \;\; \mbox{ on } \; \Omega, \label{app2} \\
		\nabla w \cdot \nu = \nabla u \cdot \nu = 0 \;\; \mbox{ on } \; \partial \Omega. \label{app3} 
	\end{gather}
	
	\begin{prop}\label{p21}
		There exists a weak solution $(u, w)$ to equation \eqref{app1}-\eqref{app3} in the space $\left(\wtt\cap C^\alpha(\overline{\Omega})\right)^2$ for some $\alpha\in (0,1)$.
	\end{prop}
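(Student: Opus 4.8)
The plan is to apply the Leray--Schauder fixed point theorem stated above, taking the Banach space to be $\mathbb{X}=L^2(\Omega)$ and the unknown to be $w$. Given $\bar w\in L^2(\Omega)$, first solve the linear Neumann problem $-\Delta u+\tau u=\bar w$ in $\Omega$, $\nabla u\cdot\nu=0$ on $\po$; by standard elliptic regularity (valid when $\Omega$ is convex or has $C^2$ boundary, cf.\ Lemma \ref{lwt}) this has a unique solution $u\in\wtt$ with $\|u\|_{\wtt}\le c\|\bar w\|_{2,\Omega}$. Next, write \eqref{app1} in terms of $\psi=\sinh(w)$: since $w=\operatorname{arcsinh}(\psi)$, it becomes the semilinear elliptic problem
\[
-\Delta\psi+\tau\operatorname{arcsinh}(\psi)=\tfrac1\tau(v-u)\ \text{ in }\ \Omega,\qquad \nabla\psi\cdot\nu=0\ \text{ on }\ \po.
\]
Because $\operatorname{arcsinh}$ is smooth, odd, strictly increasing and $1$-Lipschitz, this has a unique weak solution $\psi\in\wot$, obtained by minimizing the strictly convex functional $J(\psi)=\tfrac12\int_\Omega|\nabla\psi|^2+\tau\int_\Omega\Phi(\psi)-\tfrac1\tau\int_\Omega(v-u)\psi$, where $\Phi\ge 0$ is the convex primitive of $\operatorname{arcsinh}$; coercivity of $J$ on $\wot$ comes from the gradient term controlling the oscillation of $\psi$ together with Jensen's inequality, which shows $\int_\Omega\Phi(\psi)$ grows superlinearly in the mean of $\psi$ and hence dominates the linear term. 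Since the right-hand side is in $L^2(\Omega)$ and $|\operatorname{arcsinh}(\psi)|\le|\psi|$, elliptic regularity gives $\psi\in\wtt$. Setting $w=\operatorname{arcsinh}(\psi)$ defines the map $B(\bar w)=w$, and a pair $(u,w)$ with $w=B(w)$ is exactly a weak solution of \eqref{app1}--\eqref{app3}.

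The continuity of $B\colon L^2(\Omega)\to L^2(\Omega)$ follows from continuous dependence of $u$ on $\bar w$, continuous dependence of $\psi$ on $u$ (obtained by testing the difference of two copies of the semilinear equation against the difference of their solutions, using monotonicity of $\operatorname{arcsinh}$ and, via weak compactness and uniqueness, upgrading to strong $\wot$ convergence), and the Lipschitz continuity of the superposition operator $\psi\mapsto\operatorname{arcsinh}(\psi)$. For compactness, note that $B$ factors through $\wtt$ (both solution operators land there, and $\operatorname{arcsinh}$ maps bounded subsets of $\wot$ to bounded subsets of $\wot$ since $|\operatorname{arcsinh}'|\le1$), and $\wtt\hookrightarrow\hookrightarrow L^2(\Omega)$ by Rellich's theorem; hence $B$ carries bounded sets to precompact sets.

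It remains to bound, uniformly in $\sigma\in(0,1)$, any $w$ with $w=\sigma B(w)$. Unwinding the definition, such a $w$ and its associated $u$ satisfy
\[
-\Delta\sinh(w/\sigma)+\tfrac{\tau}{\sigma}\,w=\tfrac1\tau(v-u)\ \text{ in }\ \Omega,\qquad -\Delta u+\tau u=w\ \text{ in }\ \Omega,
\]
with the Neumann conditions. Testing the first equation with $w$ gives $\tfrac1\sigma\int_\Omega\cosh(w/\sigma)|\nabla w|^2+\tfrac\tau\sigma\int_\Omega w^2=\tfrac1\tau\int_\Omega(v-u)w$, and testing the second with $u$ gives $\int_\Omega|\nabla u|^2+\tau\int_\Omega u^2=\int_\Omega wu$. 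Adding $\tfrac1\tau$ times the second identity to the first and using $\sigma^{-1}>1$ and $\cosh\ge1$, the coupling terms $\pm\tfrac1\tau\int_\Omega uw$ cancel and we are left with
\[
\int_\Omega|\nabla w|^2+\tau\int_\Omega w^2+\tfrac1\tau\int_\Omega|\nabla u|^2+\int_\Omega u^2\le\tfrac1\tau\int_\Omega vw,
\]
so Young's inequality yields $\|w\|_{\wot}^2+\|u\|_{\wtt}^2\le C(\tau)\|v\|_{2,\Omega}^2$, independently of $\sigma$ (dependence on the fixed parameter $\tau$ being irrelevant here). The Leray--Schauder theorem then produces a weak solution $(u,w)$.

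Finally, a bootstrap argument promotes $(u,w)$ to $\bigl(\wtt\cap C^\alpha(\overline\Omega)\bigr)^2$: from \eqref{app1}, $\psi=\sinh(w)$ solves $-\Delta\psi=\tfrac1\tau(v-u)-\tau w$, where $v\in L^\infty(\Omega)$, $u\in\wtt$, and $|w|\le|\psi|$ lies in a higher Lebesgue class than $L^2$ by Sobolev embedding; iterating Calder\'on--Zygmund estimates (or Lemma \ref{l26}) along the Sobolev scale raises the integrability of $\psi$ and $u$ step by step until $\psi,u\in W^{2,p}(\Omega)$ for every finite $p$, hence $\psi,u\in C^{1,\alpha}(\overline\Omega)$; then $w=\operatorname{arcsinh}(\psi)\in\wtt\cap C^\alpha(\overline\Omega)$ as well, since $\operatorname{arcsinh}$ is smooth with $|\operatorname{arcsinh}(\psi)|\le|\psi|$ and $|\nabla\operatorname{arcsinh}(\psi)|\le|\nabla\psi|$. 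I expect the main difficulties to be the two structural points just used: extracting coercivity for the semilinear problem from the convex primitive of $\operatorname{arcsinh}$ (rather than from its growth, which is only sublinear), and arranging the a priori estimate so that the coupling term $\int_\Omega uw$ cancels, which forces one to combine the two energy identities with the correct weight instead of estimating the equations separately.
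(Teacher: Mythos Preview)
Your approach is correct and genuinely different from the paper's. The paper works in $\mathbb{X}=L^\infty(\Omega)$: given $\varphi\in L^\infty$, it first solves $-\Delta u+\tau u=\varphi$, then freezes the coefficient $\cosh(\varphi)$ and solves the \emph{linear} divergence-form problem $-\mathrm{div}(\cosh(\varphi)\nabla w)+\tau w=-\tfrac{u-v}{\tau}$, setting $B(\varphi)=w$; De Giorgi--Nash--Moser then gives $w\in C^\beta$ directly, which is what justifies the choice of $L^\infty$. The a priori estimate must accordingly reach $L^\infty$: after the same cancellation you exploit to get $\|w\|_{\wot}\le c$, the paper tests with $|w|^{p-2}w$ to obtain $\tau\|w\|_p\le\|\tfrac{u-v}{\tau}\|_p$, lets $p\to\infty$, and closes the loop via Lemma~\ref{l26} and interpolation. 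Your route, by contrast, introduces the substitution $\psi=\sinh(w)$ and solves a genuinely semilinear problem for $\psi$ by convex minimization; this costs an extra existence step (with a slightly delicate coercivity argument, since $\operatorname{arcsinh}$ grows only sublinearly), but in return the fixed-point space is just $L^2$ and the a priori bound falls out of a single energy identity. For the final regularity the paper simply notes that once $w\in L^\infty$, both $u$ and $\sinh w$ lie in $\wtt$ by Lemma~\ref{lwt}, whence $w\in\wtt$; your bootstrap to $W^{2,p}$ for all $p$ invokes Calder\'on--Zygmund for Neumann problems, which on merely convex (non-$C^2$) domains goes beyond the tools stated here---you can reroute through the paper's lemmas by observing that $w=\operatorname{arcsinh}(\psi)$ already lies in every $L^p$ by logarithmic growth, so Lemma~\ref{l26} applied to \eqref{app2} gives $u\in L^\infty$, then the same lemma applied to the $\psi$-equation gives $\psi\in L^\infty$, and Lemma~\ref{lwt} finishes.
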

	
	\begin{proof}
		To prove the existence of a solution, we will employ the Leray - Schauder Theorem. To this end we define a mapping from $L^{\infty} \left( \Omega \right) $ into itself as follows: for $ \varphi \in L^{ \infty } \left( \Omega \right) $ we first define $ u $ to be the weak solution to the problem,
		\begin{gather}
			- \Delta u + \tau u = \varphi \;\; \mbox{ in } \; \Omega, \label{LS1} \\
			\nabla u \cdot \nu = 0 \;\; \mbox{ on } \; \partial \Omega. \label{LS2}
		\end{gather}
		From the classical theory for linear elliptic equation there is a unique weak solution $u$ in the space $\wot$. Furthermore,  %we can see that there is a unique
		$u$ is H\"{o}lder continuous in $\overline{\Omega}$.
		%u \in \wot$ to this problem. 
		Using this $u$ we then form the problem,
		\begin{gather}
			- \mbox{div} \left( \cosh \left( \varphi \right) \nabla w \right) + \tau w = - \frac{ u - v }{ \tau } \;\; \mbox{ on } \; \Omega, \label{LS3} \\
			\nabla w \cdot \nu = 0 \;\; \mbox{ on } \; \partial \Omega. \label{LS4}
		\end{gather}
		Since $ \varphi \in L^{ \infty } \left( \Omega \right) $ and $\cosh\varphi \geq 1$, equation $ \eqref{LS3} $ is uniformly elliptic, and so from the classical existence theory there is a unique weak solution $w \in\wot\cap C^{\beta} \left( \overline{\Omega} \right) $ for some $ \beta \in \left( 0, 1 \right) $. We set $B \left( \varphi \right) = w $. Clearly, we can conclude that $B$ is well defined, continuous, and maps bounded sets into precompact ones. We still need to show that there is a positive number c so that 
		\begin{equation}
			\left\lVert w \right\rVert_{ \infty, \Omega } \leq c,
		\end{equation}
		for all $ w \in L^{ \infty } \left( \Omega \right) $ and $ \sigma \in \left( 0, 1 \right) $ satisfying,
		\begin{equation}
			w = \sigma B \left( w \right).
		\end{equation}
		This is equivalent to the boundary value problem,
		\begin{gather}
			- \mbox{div} \left( \cosh \left( w \right) \nabla w \right) + \tau w = - \sigma \frac{ u - v }{ \tau } \;\; \mbox{ in } \; \Omega, \label{sgm1}\\
			- \Delta u + \tau u = w \;\; \mbox{ in } \; \Omega, \label{sgm2}\\
			\nabla u \cdot \nu = \nabla w \cdot \nu = 0 \;\; \mbox{ on } \; \partial \Omega. \label{sgm3}
		\end{gather}
		First use $w$ as a test function in \eqref{sgm1} and use the fact that $\cosh(w)\geq 1$ to get,
		\begin{equation}
			\begin{aligned}
				\int_{\Omega} |\nabla w |^2 dx + \tau\int_{\Omega} w^2 dx &\leq - \frac{1}{\tau} \int_{\Omega} \left( u - v \right) wdx \\
				& = - \frac{1}{\tau} \int_{\Omega} uw dx + \frac{1}{\tau} \int_{\Omega} v w dx.\label{sgm4}
			\end{aligned}
		\end{equation}
		We now use $u$ as a test function in \eqref{sgm2}, yielding,
		\begin{equation}\label{sgm5}
			\int_{\Omega} wu dx = \int_{\Omega} |\nabla u |^2 dx + \tau \int_{\Omega} u^2 dx \geq 0.
		\end{equation}
		Using the above equation in \eqref{sgm4}, we are able to derive,
		\begin{equation}
			\int_{\Omega} |\nabla w |^2 dx + \tau\int_{\Omega} w^2 dx \leq \frac{1}{\tau} \int_{\Omega} v w dx.
		\end{equation}
		Using the above equation and \eqref{sgm5} we then have,
		\begin{equation}
			\begin{aligned}
				\int_{\Omega} |\nabla w |^2 dx + \int_{\Omega} w^2 dx &\leq c(\tau) \int_{\Omega} v^2 dx \\
				\int_{\Omega} |\nabla u |^2 dx + \int_{\Omega} u^2 dx &\leq c(\tau) \int_{\Omega} w^2 dx.\label{sgm6}
			\end{aligned}
		\end{equation}
		Then for each $p>2$, we use the function $\left\lvert w \right\rvert^{p-2} w$ as a test function in \ref{sgm1} to obtain,
		\begin{equation}
			\begin{aligned}
				\left(p-1\right)\int_{\Omega} \left\lvert w \right\rvert^{p-2} \cosh\left(w\right) \left\lvert \nabla w \right\rvert^2 dx +  \tau \int_{\Omega} \left\lvert w \right\rvert^p dx &\leq \int_{\Omega} \left\lvert \frac{u-v}{\tau} \right\rvert \left\lvert w \right\rvert^{p-1} dx \\
				& \leq \left\lVert \frac{u-v}{\tau} \right\rVert_{p,\Omega}\left\lVert w \right\rVert^{p-1}_{p,\Omega}.
			\end{aligned}
		\end{equation}
		After dropping the first integral, we obtain,
		\begin{equation}
			\tau \left\lVert w \right\rVert_{p,\Omega} \leq \left\lVert \frac{u-v}{\tau} \right\rVert_{p,\Omega}.
		\end{equation}
		Letting $p \rightarrow \infty$ then gives,
		\begin{equation}
			\tau \left\lVert w \right\rVert_{\infty,\Omega} \leq \left\lVert \frac{u-v}{\tau} \right\rVert_{\infty,\Omega}. \label{hp}
		\end{equation}
		From Lemma \ref{l26} we then get that for each $q > \max\left\{\frac{N}{2},2\right\}$ there is a positive number $c=c\left(N,\Omega,\tau\right)$ so that,
		\begin{equation}
			\left\lVert u \right\rVert_{\infty,\Omega} \leq c \left\lVert u \right\rVert_{1,\Omega} + c \left\lVert w \right\rVert_{q,\Omega} \leq c \left\lVert w \right\rVert_{q,\Omega}.
		\end{equation}
		The last step is due to \eqref{sgm6}.
		Using this in conjunction with \ref{hp} we have,
		\begin{equation}
			\begin{aligned}
				\left\lVert w \right\rVert_{\infty,\Omega} & \leq c \left\lVert u \right\rVert_{\infty,\Omega} + c \left\lVert v \right\rVert_{\infty,\Omega} \\
				& \leq c \left\lVert w \right\rVert_{q,\Omega} + c \left\lVert v \right\rVert_{\infty,\Omega} \\
				& \leq \epsilon \left\lVert w \right\rVert_{\infty,\Omega} + c\left(\epsilon\right) \left\lVert w \right\rVert_{1,\Omega} + c \left\lVert v \right\rVert_{\infty,\Omega} 
			\end{aligned}
		\end{equation}
		Taking $\epsilon$ suitably small we finally get,
		\begin{equation}
			\left\lVert w \right\rVert_{\infty,\Omega} \leq c \left\lVert v \right\rVert_{\infty,\Omega} +c\left\lVert w \right\rVert_{1,\Omega}\leq c.
		\end{equation}
		Here we have used \eqref{sgm6}.
		
		The fact that both $u$ and $\sinh w$ lie in $\wtt$ is a consequence of Lemma \ref{lwt}. Then we can deduce $w\in\wtt$ from the boundedness of $w$. The proof is complete.
	\end{proof}
	
	\section{Proof of the Main Theorem}
	
	The proof of our main Theorem will be accomplished in several stages. First we present the time discretized problem. The existence of a solution to this problem is dependent on our approximate problem from section 3. We then derive estimates similar to our apriori estimates, and show that this is enough to justify in passing to the limit. 
	
	Let $ T > 0 $ be given. For each $ j \in \{ 1, 2, 3,... \} $ we divide the time interval $ \left[ 0, T \right] $ into $ j $ equal sub-intervals. Set
	\begin{equation}
		\tau = \frac{T}{j},\ \  t_k = k \tau,\  k=0, 1,\cdots, j.
	\end{equation}
	Let $ u_0 $ be given, satisfying (H3). It is not difficult to see from (H3) that $u_0\in L^{\infty}(\Omega)$. Thus by Proposition \ref{p21} we can recursively solve the system,
	\begin{gather}
		\frac{ u_k - u_{k-1}}{\tau} - \Delta \sinh \left( w_k \right) + \tau w_k = 0 \;\; \mbox{ in } \; \Omega, \label{td1} \\
		- \Delta u_k + \tau u_k = w_k \;\; \mbox{ in } \; \Omega, \label{td2} \\
		\nabla u_k \cdot \nu = \nabla w_k \cdot \nu = 0 \;\; \mbox{ on } \; \partial \Omega. \label{td3} 
	\end{gather}
	We can pick $w_0$ and $u_{-1}$ so that the equations
	\begin{equation}
		\begin{aligned}
			\frac{u_0 - u_{-1}}{\tau} -& \Delta \sinh\left(w_0\right) + \tau w_0 = 0 \;\; \mbox{ in } \; \Omega, \label{wu1}\\
			- &\Delta u_0 + \tau u_0 = w_0 \;\; \mbox{ in } \; \Omega 
		\end{aligned}
	\end{equation}
	are satisfied. This together with (H3) implies that \eqref{td1}-\eqref{td3} still hold for $k=0$.
	% the functions $\{u_k\}_{k=1}^{j}$ are well-defined.
	%Set $ t_k = k \tau$. %For each $t\in(-\tau, T]$
	
	Next, we define the functions $	\tilde{u}_j \left( x, t \right),\bar{u}_j \left( x, t \right), \tilde{w}_j \left( x, t \right), 	\bar{w}_j \left( x, t \right)$ on $ \Omega_T$ as follows: For each $(x,t)\in \Omega_T$
	there is $k$ such that $t \in (t_{k-1}, t_k] $. Subsequently, set
	\begin{gather}
		\tilde{u}_j \left( x, t \right) = \frac{ t - t_{k-1} }{\tau} u_k \left( x \right) + \left( 1 - \frac{ t - t_{k-1} }{\tau} \right) u_{k-1} \left( x \right),  \label{usquig} \\
		\bar{u}_j \left( x, t \right) = u_k \left( x \right), \label{ubar} \\
		\tilde{w}_j \left( x, t \right) = \frac{ t - t_{k-1}}{\tau} w_k \left( x \right) + \left( 1 - \frac{ t - t_{k-1}}{\tau} \right) w_{k-1} \left( x \right) \label{wsquig} \\
		\bar{w}_j \left( x, t \right) = w_k \left( x \right). \label{wbar} %\ \ \mbox{for $ x \in \Omega$, and $ .}
	\end{gather}
	Using these functions we may then write our discretized system as,
	\begin{gather}
		\partial_t \tilde{u}_j - \Delta \sinh \left( \bar{w}_j \right) + \tau \bar{w}_j = 0 \;\; \mbox{ on } \; \Omega_T, \label{ttd1} \\
		- \Delta \bar{u}_j + \tau \bar{u}_j = \bar{w}_j \;\; \mbox{ on } \; \Omega_T. \label{ttd2} 
	\end{gather}
	We now proceed to derive the discrete analogues of our A-priori estimates. 
	First we have the discrete version of \ref{apest1}. 
	\begin{prop}\label{p31}
		For the functions $\tilde{u}_j$, $\bar{u}_j$, $\tilde{w}_j$, and $\bar{w}_j$ given by \eqref{usquig}-\eqref{wbar}, we have the following estimate:
		\begin{equation}
			\begin{aligned}
				\int_{ \Omega_T } & \left( \frac{ \partial \tilde{u}_j }{ \partial t } \right)^2 dx dt  + \int_{ \Omega_T } \left( \Delta \sinh \left( \bar{w}_j \right) \right)^2 dx dt + \tau^2 \int_{ \Omega_T } \bar{w}^2_j dx dt \\
				& + 2 \tau \int_{ \Omega_T } \left\lvert \nabla \sinh \left( \bar{w}_j \right) \right\rvert^2 dx dt + 2 \tau^2 \int_{ \Omega_T } \bar{w}_j \sinh \left( \bar{w}_j \right) dx dt \\
				& + 2 \max_{0 \leq t \leq T } \int_{ \Omega } \cosh \left( \bar{w}_j \right) dx + 2 \max_{0 \leq t \leq T } \left(\tau \int_{ \Omega } \left\lvert \nabla \bar{u}_j \right\rvert^2 dx + \tau^2 \int_{ \Omega } \bar{u}^2_j dx \right) \\
				& + 2 \tau \int_{ \Omega_T } \left\lvert \nabla \bar{w}_j \right\rvert^2 dxdt \\
				& \leq 2 \int_{ \Omega } \cosh \left( w_0 \right) dx + 2 \tau\int_{ \Omega } \left\lvert \nabla u_0 \right\rvert^2 dx + 2 \tau^2 \int_{ \Omega } u^2_0 dx \label{tdp1.1}
			\end{aligned}
		\end{equation}
	\end{prop}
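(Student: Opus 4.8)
The plan is to reproduce, at the level of the elliptic system \eqref{td1}--\eqref{td3}, the two energy identities behind \eqref{apest1} and \eqref{apest2}, to add the resulting inequalities at each time level $k$, and then to sum over $k$. By the choice of $w_0,u_{-1}$ in \eqref{wu1} and hypothesis (H3), equations \eqref{td1}--\eqref{td2} hold for every $k\in\{0,1,\dots,j\}$; moreover $\nabla u_k\cdot\nu=\nabla w_k\cdot\nu=0$ on $\partial\Omega$, hence also $\nabla\sinh(w_k)\cdot\nu=\cosh(w_k)\,\nabla w_k\cdot\nu=0$, which legitimises all of the integrations by parts below.

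First I would multiply \eqref{td1} by $2(u_k-u_{k-1})$, the discrete analogue of $2\tau\,\partial_t u$, and integrate over $\Omega$. Integrating $-2\int_\Omega(u_k-u_{k-1})\Delta\sinh(w_k)\,dx$ by parts twice and using \eqref{td2} in the form $\Delta u_k=\tau u_k-w_k$ replaces $\Delta(u_k-u_{k-1})$ by $\tau(u_k-u_{k-1})-(w_k-w_{k-1})$; the elementary inequality $f(s)(s-t)\ge F(s)-F(t)$ with $(f,F)=(\sinh,\cosh)$ turns $2\int_\Omega\sinh(w_k)(w_k-w_{k-1})\,dx$ into the telescoping difference of $2\int_\Omega\cosh(w_k)\,dx$, while the remainder $-2\tau\int_\Omega\sinh(w_k)(u_k-u_{k-1})\,dx$ becomes $2\tau^2\|\nabla\sinh w_k\|_2^2+2\tau^3\int_\Omega w_k\sinh w_k\,dx$ after substituting $u_k-u_{k-1}=\tau\Delta\sinh(w_k)-\tau^2 w_k$ from \eqref{td1} and integrating by parts once. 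The term $2\tau\int_\Omega w_k(u_k-u_{k-1})\,dx$ I rewrite via \eqref{td2} and the identity $2\langle u_k,u_k-u_{k-1}\rangle=\langle u_k,u_k\rangle-\langle u_{k-1},u_{k-1}\rangle+\langle u_k-u_{k-1},u_k-u_{k-1}\rangle$, where $\langle\varphi,\psi\rangle:=\int_\Omega(\nabla\varphi\cdot\nabla\psi+\tau\varphi\psi)\,dx$, which yields the telescoping difference of $\tau\|\nabla u_k\|_2^2+\tau^2\|u_k\|_2^2$ plus a nonnegative term. Next I multiply \eqref{td1} by $2\tau^2 w_k$ and integrate (the discrete counterpart of \eqref{apest2}): here $-2\tau^2\int_\Omega w_k\Delta\sinh(w_k)\,dx=2\tau^2\int_\Omega\cosh(w_k)|\nabla w_k|^2\,dx\ge 2\tau^2\|\nabla w_k\|_2^2$ by $\cosh\ge1$, and $2\tau\int_\Omega w_k(u_k-u_{k-1})\,dx$ telescopes exactly as above, producing a second copy of the $\tau\|\nabla u_k\|_2^2+\tau^2\|u_k\|_2^2$ difference. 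Finally, squaring \eqref{td1} and integrating gives $\|(u_k-u_{k-1})/\tau\|_2^2=\|\Delta\sinh w_k\|_2^2+2\tau\int_\Omega\cosh(w_k)|\nabla w_k|^2\,dx+\tau^2\|w_k\|_2^2$. Combining the three identities with suitable powers of $\tau$, using $\cosh\ge1$, applying the squared-equation identity to half of the doubled time-difference $\tfrac2\tau\|u_k-u_{k-1}\|_2^2$ coming from the first multiplier (the other half being kept for the $(\partial_t\tilde u_j)^2$ term), and discarding superfluous nonnegative terms, I arrive at a per-level inequality whose left-hand side is the sum of $\|(u_k-u_{k-1})/\tau\|_2^2$, $\|\Delta\sinh w_k\|_2^2$, $\tau^2\|w_k\|_2^2$, $2\tau\|\nabla\sinh w_k\|_2^2$, $2\tau^2\int_\Omega w_k\sinh w_k\,dx$, $2\tau\|\nabla w_k\|_2^2$, and the two telescoping differences (of $2\int_\Omega\cosh w_k\,dx$ and of $2(\tau\|\nabla u_k\|_2^2+\tau^2\|u_k\|_2^2)$), and whose right-hand side is $0$.

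Summing this per-level inequality over $k=1,\dots,m$ for fixed $m\le j$ collapses the telescoping differences to their values at $k=0$ and $k=m$; the $k=0$ contributions equal precisely $2\int_\Omega\cosh(w_0)\,dx+2\tau\int_\Omega|\nabla u_0|^2\,dx+2\tau^2\int_\Omega u_0^2\,dx$, which is finite by (H3). Taking the maximum over the time levels in the pointwise quantities $\int_\Omega\cosh(w_m)\,dx$ and $\tau\|\nabla u_m\|_2^2+\tau^2\|u_m\|_2^2$, taking $m=j$ in the time-integrated quantities, and translating the discrete sums into integrals of the interpolants from \eqref{usquig}--\eqref{wbar} via $\int_{\Omega_T}g(\bar w_j)\,dxdt=\tau\sum_{k=1}^j\int_\Omega g(w_k)\,dx$ and $\int_{\Omega_T}(\partial_t\tilde u_j)^2\,dxdt=\tau\sum_{k=1}^j\|(u_k-u_{k-1})/\tau\|_2^2$ (and similarly for $\bar u_j$), yields \eqref{tdp1.1}; the coefficients of $\tau^2\int_{\Omega_T}\bar w_j^2$ and of $2\tau\int_{\Omega_T}|\nabla\bar w_j|^2$ come out at least as large as stated and may simply be reduced.

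The difficulty here is organizational rather than conceptual: eight distinct quantities must emerge on the left of \eqref{tdp1.1}, each with a nonnegative coefficient, as the outcome of combining the three test-function identities, and the sign-indefinite cross terms $\int_\Omega w_k\Delta\sinh(w_k)\,dx$ and $\int_\Omega\sinh(w_k)(u_k-u_{k-1})\,dx$ must first be re-expressed, through \eqref{td1}, \eqref{td2}, and integration by parts, as good-sign or telescoping quantities before the sum over $k$ can be taken. One must also be careful to use only half of the time-difference $\tfrac2\tau\|u_k-u_{k-1}\|_2^2$ in the squared-equation identity, reserving the other half to produce the term $\int_{\Omega_T}(\partial_t\tilde u_j)^2\,dxdt$ itself.
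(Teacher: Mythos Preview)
Your proposal is correct and reaches the same per-level inequality, but the route is more elaborate than the paper's. The paper obtains everything in one stroke by squaring \eqref{td1} and integrating: the expansion of
\[
\int_\Omega\Bigl(\tfrac{u_k-u_{k-1}}{\tau}-\Delta\sinh(w_k)+\tau w_k\Bigr)^2\,dx=0
\]
immediately produces the three diagonal terms $\|(u_k-u_{k-1})/\tau\|_2^2$, $\|\Delta\sinh w_k\|_2^2$, $\tau^2\|w_k\|_2^2$ together with exactly three cross terms, each of which is then handled by one integration by parts and a substitution from \eqref{td1} or \eqref{td2}. Your steps~1 and~3 together are equivalent to this single squaring move: multiplying \eqref{td1} by $2(u_k-u_{k-1})$ is just $\tau$ times multiplying by $2(u_k-u_{k-1})/\tau$, and feeding your step~3 identity back into half of the resulting $\tfrac{2}{\tau}\|u_k-u_{k-1}\|_2^2$ is precisely the algebraic expansion of the square. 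Your step~2 (testing with $2\tau^2 w_k$) is then not needed as a separate ingredient---in the paper's organization its content is already the cross term $-2\tau\int w_k\Delta\sinh(w_k)\,dx$ appearing in the square expansion. So the paper's proof is shorter and less bookkeeping-intensive, while your version has the virtue of making explicit which test function is responsible for which group of terms and of using the sharp identity $2a(a-b)=a^2-b^2+(a-b)^2$ rather than the cruder inequality $2a(a-b)\ge a^2-b^2$.
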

	\begin{proof}
		First we square both sides of equation $ \eqref{td1} $, and the integrate the resulting equation over $ \Omega $ to obtain,
		\begin{equation}
			\begin{aligned}
				\int_{ \Omega } & \left( \frac{ u_k - u_{k-1} }{\tau} \right)^2 dx + \int_{ \Omega } \left( \Delta \sinh \left( w_k \right) \right)^2 dx + \tau^2 \int_{ \Omega } w^2_k dx \\
				& - \frac{ 2 }{ \tau } \int_{ \Omega } \left( u_k - u_{k-1} \right) \Delta \sinh \left( w_k \right) dx + 2 \int_{ \Omega } \left( u_k - u_{k-1} \right) w_k dx \\
				& - 2 \tau \int_{ \Omega } w_k \Delta \sinh \left( w_k \right) dx = 0. \label{tdp1.2}
			\end{aligned}
		\end{equation}
		The first three integrals in this equation are good, we need only worry about the final three. For the fourth integral  in the above equation we calculate,
		\begin{equation}
			\begin{aligned}
				- \frac{ 2 }{ \tau } \int_{ \Omega } & \left( u_k - u_{k-1} \right) \Delta \sinh \left( w_k \right) dx = - \frac{2}{\tau} \int_{ \Omega } \Delta \left( u_k - u_{k-1} \right) \sinh \left( w_k \right) dx \\
				& = \frac{ 2 }{ \tau } \int_{ \Omega } \left[ - \tau \left( u_k - u_{k-1} \right) + \left( w_k - w_{k-1} \right) \right] \sinh \left( w_k \right) dx \\
				& = - 2 \tau \int_{ \Omega } \left( \frac{u_k - u_{k-1}}{ \tau } \right) \sinh \left( w_k \right) dx + \frac{2}{\tau} \int_{ \Omega } \left( w_k - w_{k-1} \right) \sinh \left( w_k \right) dx \\
				& = - 2 \tau \int_{ \Omega } \Delta \sinh \left( w_k \right) \sinh \left( w_k \right) dx + 2 \tau^2 \int_{ \Omega } w_k \sinh \left( w_k \right) dx \\
				&+ \frac{2}{ \tau } \int_{ \Omega } \left( w_k - w_{k-1} \right) \sinh \left( w_k \right) dx \\
				& = 2 \tau \int_{ \Omega } \left\lvert \nabla \sinh \left( w_k \right) \right\rvert^2 dx + 2 \tau^2 \int_{ \Omega } w_k \sinh \left( w_k \right) dx  + \frac{2}{\tau} \int_{ \Omega } \left( w_k - w_{k-1} \right) \sinh \left( w_k \right) dx \\
				& \geq 2 \tau \int_{ \Omega } \left\lvert \nabla \sinh \left( w_k \right) \right\rvert^2 dx + 2 \tau^2 \int_{ \Omega } w_k \sinh \left( w_k \right) dx \\
				&+ \frac{2}{\tau} \int_{ \Omega } \left( \cosh \left( w_k \right) - \cosh \left( w_{k-1} \right) \right) dx \label{tdp1.3}
			\end{aligned}
		\end{equation}
		Then for the fifth integral in equation $ \eqref{tdp1.2} $ we calculate,
		\begin{equation}
			\begin{aligned}
				2 \int_{ \Omega }  \left( u_k - u_{k-1} \right) w_k dx & = 2 \int_{ \Omega } \left( u_k - u_{k-1} \right) \left( - \Delta u_k + \tau u_k \right) dx \\
				& = - 2 \int_{ \Omega } \left( u_k - u_{k-1} \right) \Delta u_k dx + 2 \tau \int_{ \Omega } \left( u_k - u_{k-1} \right) u_k dx \\
				& = 2 \int_{ \Omega } \left( \nabla u_k - \nabla u_{k-1} \right) \nabla u_k dx + 2 \tau \int_{ \Omega } \left( u_k - u_{k-1} \right) u_k dx \\
				& \geq 2 \int_{ \Omega } \left( \left\lvert \nabla u_k \right\rvert^2 - \left\lvert \nabla u_{k-1} \right\rvert^2 \right) dx + 2 \tau \int_{ \Omega } \left( u^2_k - u^2_{k-1} \right) dx. \label{tdp1.4}
			\end{aligned}
		\end{equation}
		Finally for the last integral in equation $ \eqref{tdp1.2} $ we calculate,
		\begin{equation}
			\begin{aligned}
				- 2 \tau \int_{ \Omega } w_k \Delta \sinh \left( w_k \right) dx & = 2 \tau \int_{ \Omega } \nabla w_k \cdot \nabla \sinh \left( w_k \right) dx \\
				& = 2 \tau \int_{ \Omega } \nabla w_k \cdot \left( \cosh \left( w_k \right) \nabla w_k \right) dx \\
				& = 2 \tau \int_{ \Omega } \cosh \left( w_k \right) \left\lvert \nabla w_k \right\rvert^2 dx \\
				& \geq 2 \tau \int_{ \Omega } \left\lvert \nabla w_k \right\rvert^2 dx. \label{tdp1.5}
			\end{aligned}
		\end{equation}
		We plug each of the integrals $ \eqref{tdp1.3} $ - $ \eqref{tdp1.5} $ back into equation $ \eqref{tdp1.2} $ to then get the inequality,
		\begin{equation}
			\begin{aligned}
				\int_{ \Omega } & \left( \frac{ u_k - u_{k-1} }{ \tau } \right)^2 dx + \int_{ \Omega } \left( \Delta \sinh \left( w_k \right) \right)^2 dx + \tau^2 \int_{ \Omega } w^2_k dx + 2 \tau \int_{ \Omega } \left\lvert \nabla \sinh \left( w_k \right) \right\rvert^2 dx \\
				& + 2 \tau^2 \int_{ \Omega } w_k \sinh \left( w_k \right) dx + \frac{ 2 }{ \tau } \int_{ \Omega } \left( \cosh \left( w_k \right) - \cosh \left( w_{k-1} \right) \right) dx + 2 \tau \int_{ \Omega } \left\lvert \nabla w_k \right\rvert^2 dx \\
				& + 2 \int_{ \Omega } \left( \left\lvert \nabla u_k \right\rvert^2 - \left\lvert \nabla u_{ k-1} \right\rvert^2 \right) dx + 2 \tau \int_{ \Omega } \left( u^2_k - u^2_{k-1} \right) dx = 0. \label{tdp1.6}
			\end{aligned}
		\end{equation}
		Finally we multiply through this inequality by $ \tau $ and sum up over $ k $ to get the result.
	\end{proof}
	
	Next, we have the discrete version of \ref{apest2}.
	\begin{prop}\label{p32}
		Let the functions $\tilde{u}_j$, $\bar{u}_j$, $\tilde{w}_j$, and $\bar{w}_j$ be given by \eqref{usquig}-\eqref{wbar}, then we have the estimate:
		\begin{equation}
			\begin{aligned}
				\max_{0 \leq t \leq T } & \left( \int_{ \Omega } \left\lvert \nabla \bar{u}_j \right\rvert^2 dx + \tau \int_{ \Omega } \bar{u}^2_j dx \right) +\tau^3 \int_{ \Omega_T } \bar{u}^2_j dx dt \\
				&+ \int_{ \Omega_T } \left\lvert \nabla \bar{w}_j \right\rvert^2 dx dt + \tau \int_{ \Omega_T } \left( \Delta \bar{u}_j \right)^2 dx dt + 2\tau^2 \int_{ \Omega_T} \left\lvert \nabla \bar{u}_j \right\rvert^2 dxdt \\
				& \leq \int_{ \Omega } \left\lvert \nabla u_0 \right\rvert^2 dx + \tau \int_{ \Omega } u^2_0 dx. \label{tdp2.1}
			\end{aligned}
		\end{equation}
	\end{prop}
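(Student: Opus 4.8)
The plan is to reproduce the computation behind the a priori estimate \eqref{apest2} at the discrete level, the only difference being the presence of the regularizing terms $\tau w_k$ and $\tau u_k$. The natural test function is $w_k$ itself. I would multiply \eqref{td1} by $w_k$, integrate over $\Omega$, and obtain
\begin{equation}
\int_\Omega\frac{u_k-u_{k-1}}{\tau}\,w_k\,dx-\int_\Omega\Delta\sinh(w_k)\,w_k\,dx+\tau\int_\Omega w_k^2\,dx=0.
\end{equation}
The middle integral is handled by parts: since $\nabla w_k\cdot\nu=0$ on $\po$ and $\nabla\sinh(w_k)=\cosh(w_k)\nabla w_k$, the boundary term vanishes and it equals $\int_\Omega\cosh(w_k)|\nabla w_k|^2\,dx\geq\int_\Omega|\nabla w_k|^2\,dx$. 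In the first integral I would substitute $w_k=-\Delta u_k+\tau u_k$ from \eqref{td2}, integrate the Laplacian piece by parts (boundary term vanishing since $\nabla u_k\cdot\nu=0$), and apply $2a(a-b)=a^2-b^2+(a-b)^2$ to each resulting term, discarding the nonnegative squared differences; this yields
\begin{equation}
\int_\Omega\frac{u_k-u_{k-1}}{\tau}\,w_k\,dx\ \geq\ \frac{1}{2\tau}\int_\Omega\bigl(|\nabla u_k|^2-|\nabla u_{k-1}|^2\bigr)dx+\frac12\int_\Omega\bigl(u_k^2-u_{k-1}^2\bigr)dx.
\end{equation}

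Assembling these, multiplying by $2\tau$, and summing over $k=1,\dots,m$ telescopes the $u$-differences and gives
\begin{equation}
\int_\Omega|\nabla u_m|^2\,dx+\tau\int_\Omega u_m^2\,dx+2\tau\sum_{k=1}^m\int_\Omega|\nabla w_k|^2\,dx+2\tau^2\sum_{k=1}^m\int_\Omega w_k^2\,dx\ \leq\ \int_\Omega|\nabla u_0|^2\,dx+\tau\int_\Omega u_0^2\,dx.
\end{equation}
To recover the terms of \eqref{tdp2.1} involving $\Delta\bar u_j$ I would expand, using \eqref{td2} once more, $\int_\Omega w_k^2\,dx=\int_\Omega(\Delta u_k)^2\,dx+2\tau\int_\Omega|\nabla u_k|^2\,dx+\tau^2\int_\Omega u_k^2\,dx$, so the last sum splits into the $(\Delta\bar u_j)^2$, $\tau^2|\nabla\bar u_j|^2$, and $\tau^3\bar u_j^2$ contributions. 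It then remains to rewrite each $\tau\sum_{k=1}^m(\cdot)$ as the integral of the corresponding step function $\bar u_j$ or $\bar w_j$ over $\Omega\times(0,t_m]$, keeping track of the powers of $\tau$, and to take the maximum over $m$ in the $u$-energy term; since the coefficients in \eqref{tdp2.1} are no larger than those produced above, the stated inequality follows.

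I do not expect a genuine obstacle: this is a routine discrete energy identity, and the positivity $\cosh\geq1$ does all the work, with no interpolation or compactness needed here. The points that require care are the several integrations by parts, whose boundary integrals all vanish by \eqref{td3} together with the chain rule for $\sinh$, and the bookkeeping of the factors of $\tau$ when converting the discrete sums into space-time integrals of $\bar u_j$ and $\bar w_j$. This is carried out in the same spirit as the proof of Proposition \ref{p31}.
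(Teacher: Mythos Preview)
Your argument is correct and arrives at the same estimate, but the route differs slightly from the paper's. The paper takes the gradient of \eqref{td1}, dots with $\nabla u_k$, and integrates; this amounts to testing \eqref{td1} with $-\Delta u_k$. You instead test \eqref{td1} directly with $w_k=-\Delta u_k+\tau u_k$, which is the same choice plus an additional $\tau u_k$ piece. Your route is a bit cleaner: the middle term immediately yields $\int_\Omega\cosh(w_k)|\nabla w_k|^2\,dx$ after one integration by parts, and the $(\Delta u_k)^2$, $|\nabla u_k|^2$, and $u_k^2$ contributions all drop out at once from the single identity $\int_\Omega w_k^2\,dx=\int_\Omega(\Delta u_k)^2\,dx+2\tau\int_\Omega|\nabla u_k|^2\,dx+\tau^2\int_\Omega u_k^2\,dx$. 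The paper, by contrast, must pass through $-\int_\Omega\nabla\sinh(w_k)\cdot\nabla\Delta u_k\,dx$ and substitute $\Delta u_k=\tau u_k-w_k$ in several places before the same terms emerge. The price you pay is only an extra (harmless) factor of two on the space--time integrals, which you correctly absorb by noting that the coefficients in \eqref{tdp2.1} are no larger than yours. One small remark: combining the $\max_m$ of the energy with the full $\Omega_T$ integrals into a \emph{single} inequality with the stated right-hand side is, strictly speaking, a factor-of-two abuse; but the paper commits the same abuse, and it is immaterial for the subsequent compactness arguments.
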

	\begin{proof}
		We take the gradient of both side of equation $ \eqref{td1} $, and then dot the resulting equation with $ \nabla u_k $ and integrate over $ \Omega $ to get,
		\begin{equation}
			\frac{1}{\tau} \int_{ \Omega } \nabla \left( u_k - u_{k-1} \right) \cdot \nabla u_k dx - \int_{ \Omega } \nabla \left( \Delta \sinh \left( w_k \right) \right) \cdot \nabla u_k dx + \tau \int_{ \Omega } \nabla w_k \cdot \nabla u_k dx = 0 \label{tdp2.2}
		\end{equation}
		For the first integral in the above equation we calculate,
		\begin{equation}
			\frac{1}{\tau} \int_{ \Omega } \nabla \left( u_k - u_{k-1} \right) \cdot \nabla u_k dx \geq \frac{1}{\tau} \int_{ \Omega } \left( \left\lvert \nabla u_k \right\rvert^2 - \left\lvert \nabla u_{k-1} \right\rvert^2 \right) dx. \label{tdp2.3}
		\end{equation}
		For the second integral in equation $ \eqref{tdp2.2} $ we calculate,
		\begin{equation}
			\begin{aligned}
				- \int_{ \Omega } & \nabla \left( \Delta \sinh \left( w_k \right) \right) \cdot \nabla u_k dx = - \int_{ \Omega } \nabla \sinh \left( w_k \right) \cdot \nabla \Delta u_k dx \\
				& = \int_{ \Omega } \nabla \sinh \left( w_k \right) \cdot \nabla \left( - \tau u_k + w_k \right) dx \\
				& = - \tau \int_{ \Omega } \nabla \sinh \left( w_k \right) \cdot \nabla u_k dx + \int_{ \Omega } \nabla \sinh \left( w_k \right) \cdot \nabla w_k dx \\
				& = \tau \int_{ \Omega } \Delta \sinh \left( w_k \right) u_k dx + \int_{ \Omega } \cosh \left( w_k \right) \left\lvert \nabla w_k \right\rvert^2 dx \\
				& = \tau \int_{ \Omega } \left[ \frac{ u_k - u_{k-1} }{ \tau } + \tau w_k \right] u_k dx + \int_{ \Omega } \cosh \left( w_k \right) \left\lvert \nabla w_k \right\rvert^2 dx \\
				& = \int_{ \Omega } \left( u_k - u_{k-1} \right) u_k dx + \tau^2 \int_{ \Omega } w_k u_k dx + \int_{ \Omega } \cosh \left( w_k \right) \left\lvert \nabla w_k \right\rvert^2 dx \\
				& = \int_{ \Omega } \left( u_k - u_{k-1} \right) u_k dx + \tau^2 \int_{ \Omega } \left( - \Delta u_k + \tau u_k \right) u_k dx + \int_{ \Omega } \cosh \left( w_k \right) \left\lvert \nabla w_k \right\rvert^2 dx \\
				& = \int_{ \Omega } \left( u_k - u_{k-1} \right) u_k dx - \tau^2 \int_{ \Omega } u_k \Delta u_k dx + \tau^3 \int_{ \Omega } dx + \int_{ \Omega } \cosh \left( w_k \right) \left\lvert \nabla w_k \right\rvert^2 dx \\
				& = \int_{ \Omega } \left( u_k - u_{k-1} \right) u_k dx + \tau^2 \int_{ \Omega } \left\lvert \nabla u_k \right\rvert^2 dx + \tau^3 \int_{ \Omega } u^2_k dx + \int_{ \Omega } \cosh \left( w_k \right) \left\lvert \nabla w_k \right\rvert^2 dx \\
				& \geq \int_{ \Omega } \left( u^2_k - u^2_{k-1} \right) dx + \tau^2 \int_{ \Omega } \left\lvert \nabla u_k \right\rvert^2 dx + \tau^3 \int_{ \Omega } u^2_k dx + \int_{ \Omega } \left\lvert \nabla w_k \right\rvert^2 dx. \label{tdp2.4}
			\end{aligned}
		\end{equation}
		Finally for the last integral in equation $ \eqref{tdp2.2} $ we calculate,
		\begin{equation}
			\begin{aligned}
				\tau \int_{ \Omega } \nabla w_k \cdot \nabla u_k dx & = - \tau \int_{ \Omega } w_k \Delta u_k dx \\
				&= - \tau \int_{ \Omega } \left( - \Delta u_k + \tau u_k \right) \Delta u_k dx \\
				& = \tau \int_{ \Omega } \left( \Delta u_k \right)^2 dx - \tau^2 \int_{ \Omega } u_k \Delta u_k dx \\
				&= \tau \int_{ \Omega } \left( \Delta u_k \right)^2 dx + \tau^2 \int_{ \Omega } \left\lvert \nabla u_k \right\rvert^2 dx. \label{tdp2.5}
			\end{aligned}
		\end{equation}
		Upon putting $ \eqref{tdp2.3} $ - $ \eqref{tdp2.5} $ back into equation $ \eqref{tdp2.2} $ we obtain,
		\begin{equation}
			\begin{aligned}
				\frac{1}{\tau} \int_{ \Omega } & \left( \left\lvert \nabla u_k \right\rvert^2 - \left\lvert \nabla u_{k-1} \right\rvert^2 \right) dx + \int_{ \Omega } \left( u^2_k - u^2_{k-1} \right) dx \\
				& + \tau^3 \int_{ \Omega } u^2_k dx + \int_{ \Omega } \left\lvert \nabla w_k \right\rvert^2 dx \\
				& + \tau \int_{ \Omega } \left( \Delta u_k \right)^2 dx +2 \tau^2 \int_{ \Omega } \left\lvert \nabla u_k \right\rvert^2 dx = 0. 
			\end{aligned}
		\end{equation}
		Multiply through this inequality by $ \tau $ and sum up over $k$ to obtain the result.
	\end{proof}
	
	Now we can obtain the discrete version of $ \eqref{apest3} $.
	\begin{prop}\label{p33}
		For the functions given by \eqref{usquig} - \eqref{wbar}, we have the estimate,
		\begin{equation}
			\begin{aligned}
				\int_{ \Omega_T } & \left( \frac{\partial \tilde{w}_j }{\partial t } \right)^2 dx dt + \frac{1}{2} \max_{0 \leq t \leq T } \int_{ \Omega } \left( \frac{ \partial \tilde{u}_j }{ \partial t } \right)^2 dx + \tau^2 \max_{0 \leq t \leq T } \int_{ \Omega } \bar{w}_j \sinh \left( \bar{w}_j \right) dx \\
				& + \frac{1}{2} \max_{0 \leq t \leq T }\tau \int_{ \Omega } \left\lvert \nabla \sinh \left( \bar{w}_j \right) \right\rvert^2 dx + \tau \int_{ \Omega } \left\lvert \nabla \frac{\partial \tilde{u}_j}{\partial t } \right\rvert^2 dx dt + \tau^2 \int_{ \Omega_T} \left( \frac{ \partial \tilde{u}_j }{ \partial t } \right)^2 dx dt \\
				& \leq\int_{ \Omega } \left( \Delta \sinh\left(w_0\right) - \tau w_0 \right)^2 dx + \tau\int_{ \Omega } \left\lvert \nabla \sinh \left( w_0 \right) \right\rvert^2 dx + 2\tau^2 \int_{\Omega } w_0 \sinh \left( w_0 \right) dx. \label{tdp4.1}
			\end{aligned}
		\end{equation}
		%\frac{u_0 - u_{-1}}{ \tau } \frac{1}{2} 
	\end{prop}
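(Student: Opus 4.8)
The plan is to reproduce, at the level of the time discretization, the computation behind the a priori estimate \eqref{apest3}, in which one differentiates \eqref{defeq1} in $t$ and tests against $\partial_t u$. The discrete analogue of $\partial_t u$ at step $k$ is $a_k:=(u_k-u_{k-1})/\tau$ (so that $\partial_t\tilde u_j=a_k$ on $(t_{k-1},t_k)$), so I would first subtract \eqref{td1} at level $k-1$ from \eqref{td1} at level $k$, obtaining $(a_k-a_{k-1})-\Delta(\sinh w_k-\sinh w_{k-1})+\tau(w_k-w_{k-1})=0$ in $\Omega$, then multiply by $a_k$ and integrate over $\Omega$. For the first term, $(a_k-a_{k-1})a_k\ge\frac12(a_k^2-a_{k-1}^2)$, which telescopes. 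For the Laplacian term, integrating by parts twice (the boundary integrals vanish because \eqref{td3} gives $\nabla a_k\cdot\nu=0$ and $\nabla\sinh w_k\cdot\nu=\cosh(w_k)\nabla w_k\cdot\nu=0$) and then using \eqref{td2} in the differenced form $\Delta(u_k-u_{k-1})=\tau(u_k-u_{k-1})-(w_k-w_{k-1})$ produces $\frac1\tau\int_\Omega(\sinh w_k-\sinh w_{k-1})(w_k-w_{k-1})\,dx$, which by $\cosh\ge1$ is $\ge\frac1\tau\int_\Omega(w_k-w_{k-1})^2\,dx$, and after summation becomes $\int_{\Omega_T}(\partial_t\tilde w_j)^2\,dx\,dt$.

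The terms still carrying a factor $u_k-u_{k-1}$ I would clear with \eqref{td1} again, as $u_k-u_{k-1}=\tau\Delta\sinh w_k-\tau^2 w_k$. Integrating by parts, the $\Delta\sinh w_k$ part yields $\tau\int_\Omega\nabla(\sinh w_k-\sinh w_{k-1})\cdot\nabla\sinh w_k\,dx\ge\frac\tau2\int_\Omega(|\nabla\sinh w_k|^2-|\nabla\sinh w_{k-1}|^2)\,dx$, a second telescoping term (the source of the $\frac\tau2\int_\Omega|\nabla\sinh\bar w_j|^2\,dx$ quantity), and — after one further appeal to \eqref{td2} on the differences — the non-negative quantity $\tau^2\int_\Omega|\nabla a_k|^2\,dx+\tau^3\int_\Omega a_k^2\,dx$, which sums to $\tau\int_{\Omega_T}|\nabla\partial_t\tilde u_j|^2\,dx\,dt+\tau^2\int_{\Omega_T}(\partial_t\tilde u_j)^2\,dx\,dt$. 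The one genuinely new contribution is the mixed term $-\tau^2\int_\Omega(\sinh w_k-\sinh w_{k-1})\,w_k\,dx$, where the two nonlinearities meet. For it I would use part (1) of the elementary-inequalities lemma of Section 2 with $f=\sinh$, $F=\cosh$: from $\sinh(w_{k-1})(w_k-w_{k-1})\le\cosh w_k-\cosh w_{k-1}$, together with the identity $(\sinh w_k-\sinh w_{k-1})w_k=(w_k\sinh w_k-w_{k-1}\sinh w_{k-1})-\sinh(w_{k-1})(w_k-w_{k-1})$, one obtains $(\sinh w_k-\sinh w_{k-1})w_k\ge\Psi(w_k)-\Psi(w_{k-1})$ with $\Psi(s):=s\sinh s-\cosh s$, so this term telescopes too.

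Collecting the per-step inequality, summing over $k=1,\dots,m$ (so that $\sum_k\tau\int_\Omega(\cdot)\,dx=\int_{\Omega_T}(\cdot)\,dx\,dt$) and maximising over $m$ yields an inequality of the form \eqref{tdp4.1}, up to two bookkeeping points. First, the telescoping of the mixed term puts $\Psi(w_0)$ rather than $w_0\sinh w_0$ on the right, which is harmless since $\Psi(w_0)\le w_0\sinh w_0$ and the remaining numerical constants are dominated ($\frac12\le1$, $\frac\tau2\le\tau$). Second, it leaves $\Psi(\bar w_j)=\bar w_j\sinh\bar w_j-\cosh\bar w_j$, not $\bar w_j\sinh\bar w_j$, on the left; I would absorb the residual $-\tau^2\int_\Omega\cosh\bar w_j\,dx$ by invoking Proposition \ref{p31}, which bounds $\max_{0\le t\le T}\int_\Omega\cosh\bar w_j\,dx$ uniformly in $j$ by data that is finite under (H3), together with the elementary bound $\cosh s\le1+s\sinh s$.

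I expect the mixed term to be the only delicate point: tracking the interaction of $\sinh(w)$ with the factor $w=-\Delta u$ so that it telescopes cleanly, and then disposing of the leftover $\cosh$-integral through Proposition \ref{p31}, is where the work lies, and it is the only place where anything beyond the crude bound $\cosh\ge1$ and the telescoping structure already used in Propositions \ref{p31} and \ref{p32} is needed. Everything else is routine integration by parts and the elementary inequalities of Section 2.
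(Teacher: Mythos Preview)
Your plan is exactly the paper's: difference \eqref{td1}, multiply by $a_k=(u_k-u_{k-1})/\tau$, integrate, and process the three resulting integrals by the same substitutions (two integrations by parts, then \eqref{td2} on the differences, then \eqref{td1} to eliminate the remaining $u_k-u_{k-1}$). The only substantive divergence is in the mixed term $\tau\int_\Omega(\sinh w_k-\sinh w_{k-1})\,w_k\,dx$ (your sign ``$-\tau^2$'' is a slip; the term enters with a positive sign, which is precisely the direction you then use).

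At that step the paper splits $\sinh$ into exponentials and asserts the pointwise bound
\[
(\sinh w_k-\sinh w_{k-1})\,w_k\ \ge\ w_k\sinh w_k-w_{k-1}\sinh w_{k-1},
\]
so that $w\sinh w$ telescopes directly and \eqref{tdp4.1} comes out exactly as written, with no residual $\cosh$ integral. Your route via $\Psi(s)=s\sinh s-\cosh s$ gives instead $(\sinh w_k-\sinh w_{k-1})\,w_k\ge\Psi(w_k)-\Psi(w_{k-1})$ and leaves $-\tau^2\int_\Omega\cosh\bar w_j\,dx$ to be absorbed afterward. Your caution is in fact warranted: the paper's displayed inequality is equivalent to $(w_{k-1}-w_k)\sinh w_{k-1}\ge0$, which fails for, say, $w_{k-1}=1,\ w_k=2$. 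So your $\Psi$-telescoping is the correct version of this step; it yields \eqref{tdp4.1} with either $\bar w_j\sinh\bar w_j$ replaced by $\Psi(\bar w_j)\ge-1$ on the left, or an additional $O(\tau^2)$ data term on the right via Proposition~\ref{p31}, and either variant is all that the subsequent arguments require.
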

	\begin{proof}
		From equation $ \eqref{td1} $ we derive the equation, for $k=1,2,...$
		\begin{equation}
			\frac{1}{\tau} \left( \frac{u_k - u_{k-1} }{\tau} - \frac{u_{k-1} - u_{k-2} }{\tau} \right) - \Delta \left( \frac{ \sinh \left( w_k \right) - \sinh \left( w_{k-1} \right) }{ \tau } \right) + \tau \frac{ w_k - w_{k-1} }{ \tau } = 0  \;\; \mbox{ in } \; \Omega \label{tdp4.2}
		\end{equation}
		We multiply through this equation by the function $ \frac{ u_k - u_{k-1} }{ \tau } $, and integrate over $ \Omega $ to obtain,
		\begin{equation}
			\begin{aligned}
				\frac{1}{\tau} \int_{\Omega} & \left( \frac{ u_k - u_{k-1} }{ \tau} - \frac{ u_{k-1} - u_{k-2} }{\tau} \right) \left( \frac{u_k - u_{k-1} }{\tau} \right) dx \\
				& - \int_{ \Omega } \Delta \left( \frac{ \sinh \left( w_k \right) - \sinh \left( w_{k-1} \right) }{ \tau } \right) \left( \frac{ u_k - u_{k-1} }{ \tau } \right) dx \\
				& + \tau \int_{ \Omega } \left( \frac{ w_k - w_{k-1} }{\tau} \right) \left( \frac{ u_k - u_{k-1} }{ \tau } \right) dx = 0. \label{tdp4.3}
			\end{aligned}
		\end{equation}
		For the first integral in the above equation we then calculate,
		\begin{equation}
			\begin{aligned}
				\frac{1}{\tau} \int_{ \Omega } & \left( \frac{ u_k - u_{k-1} }{ \tau } - \frac{ u_{k-1} - u_{k-2} }{\tau} \right) \left( \frac{ u_k - u_{k-1} }{ \tau} \right) dx \\
				& \geq \frac{1}{2\tau} \int_{ \Omega } \left[ \left( \frac{ u_k - u_{k-1} }{\tau} \right)^2 - \left( \frac{ u_{k-1} - u_{k-2} }{ \tau } \right)^2 \right] dx. \label{tdp4.4}
			\end{aligned}
		\end{equation}
		For the second integral in $ \eqref{tdp4.3} $ we first calculate,
		\begin{equation}
			\begin{aligned}
				& - \int_{\Omega} \Delta \left( \frac{ \sinh \left( w_k \right) - \sinh \left( w_{k-1} \right) }{ \tau } \right) \left( \frac{ u_k - u_{k-1} }{ \tau } \right) dx \\
				& = - \int_{ \Omega } \left( \frac{ \sinh \left( w_k \right) - \sinh \left( w_{k-1} \right) }{ \tau } \right) \Delta \left( \frac{ u_k - u_{k-1} }{\tau} \right) dx \\
				& = \int_{ \Omega } \left( \frac{ \sinh \left( w_k \right) - \sinh \left( w_{k-1} \right) }{ \tau } \right) \left( - \tau \left( \frac{ u_k - u_{k-1} }{\tau } \right) + \left( \frac{ w_k - w_{k-1} }{ \tau } \right) \right) dx \\
				& = - \tau \int_{ \Omega } \left( \frac{ \sinh \left( w_k \right) - \sinh \left( w_{k-1} \right) }{ \tau } \right) \left( \frac{ u_k - u_{k-1} }{ \tau } \right) dx \\
				& + \int_{\Omega } \left( \frac{ \sinh \left( w_k \right) - \sinh \left( w_{k-1} \right) }{ \tau } \right) \left( \frac{w_k - w_{k-1} }{ \tau } \right) dx \\
				& = \int_{ \Omega } \left( \sinh \left( w_k \right) - \sinh \left( w_{k-1} \right) \right) \left( - \Delta \sinh \left( w_k \right) + \tau w_k \right) dx \\
				& + \int_{ \Omega } \left( \frac{ \sinh \left( w_k \right) - \sinh \left( w_{k-1} \right) }{ \tau } \right)  \left( \frac{w_k - w_{k-1} }{ \tau } \right) dx \\
				& = - \int_{ \Omega } \left( \sinh \left( w_k \right) - \sinh \left( w_{k-1} \right) \right) \Delta \sinh \left( w_k \right) dx + \tau \int_{ \Omega } \left( \sinh \left( w_k \right) - \sinh \left( w_{k-1} \right) \right) w_k dx \\
				& + \int_{ \Omega } \left( \frac{ \sinh \left( w_k \right) - \sinh \left( w_{k-1} \right) }{ \tau } \right)  \left( \frac{w_k - w_{k-1} }{ \tau } \right) dx \label{tdp4.5}
			\end{aligned}
		\end{equation}
		We now have three integrals to worry about here. For the first integral on the right most side of this equation we have,
		\begin{equation}
			\begin{aligned}
				- \int_{ \Omega } &\left( \sinh \left( w_k \right) - \sinh \left( w_{k-1} \right) \right) \Delta \sinh \left( w_k \right) dx \\
				& = \int_{ \Omega } \left( \nabla \sinh \left( w_k \right) - \nabla \sinh \left( w_{k-1} \right) \right) \nabla \sinh \left( w_k \right) dx \\
				& \geq \frac{1}{2} \int_{ \Omega } \left( \left\lvert \nabla \sinh \left( w_k \right) \right\rvert^2 - \left\lvert \nabla \sinh \left( w_{k-1} \right) \right\rvert^2 \right) dx \label{tdp4.6}
			\end{aligned}
		\end{equation}
		Towards estimating the second integral we first calculate,
		\begin{equation}
			\begin{aligned}
				\left( \sinh \left( w_k \right) - \sinh \left( w_{k-1} \right) \right) w_k & = \left[ \frac{1}{2} \left( e^{w_k} - e^{-w_k} \right) - \frac{1}{2} \left( e^{w_{k-1}} - e^{- w_{k-1}} \right) \right] w_k \\
				& = \left[ \frac{1}{2} \left( e^{w_k} - e^{w_{k-1}} \right) + \frac{1}{2} \left( e^{- w_{k-1}} - e^{-w_k} \right) \right] w_k \\
				& = \frac{1}{2} \left( e^{w_k} - e^{w_{k-1}} \right) w_k + \frac{1}{2} \left( e^{-w_k} - e^{- w_{k-1}} \right) \left( - w_k \right) \\
				& \geq \frac{1}{2} \left( w_k e^{w_k} - w_{k-1} e^{w_{k-1}} - \left( w_k - w_{k-1} \right) \right) \\
				&+ \frac{1}{2} \left( - w_k e^{-w_k} + w_{k-1} e^{- w_{k-1}} - \left( - w_k + w_{k-1} \right) \right) \\
				& = w_k \frac{1}{2} \left( e^{w_k} - e^{ -w_k} \right) - w_{k-1} \frac{1}{2} \left( e^{w_{k-1}} - e^{-w_{k-1}} \right) \\
				& = w_k \sinh \left( w_k \right) - w_{k-1} \sinh \left( w_{k-1} \right) 
			\end{aligned}
		\end{equation}
		Consequently we then have,
		\begin{equation}
			\tau \int_{ \Omega } \left( \sinh \left( w_k \right) - \sinh \left( w_{k-1} \right) \right) w_k dx \geq \tau \int_{ \Omega } \left( w_k \sinh \left( w_k \right) - w_{k-1} \sinh \left( w_{k-1} \right) \right) dx. \label{tdp4.7}
		\end{equation}
		For the last integral in equation $ \eqref{tdp4.5} $ we then calculate using the mean value theorem.
		\begin{equation}
			\begin{aligned}
				\int_{ \Omega } \left( \frac{ \sinh \left( w_k \right) - \sinh \left( w_{k-1} \right) }{ \tau } \right)& \left( \frac{ w_k - w_{k-1} }{ \tau } \right) dx  = \int_{ \Omega } \cosh \left( \xi \right) \left( \frac{ w_k - w_{k-1} }{ \tau } \right)^2 dx \\
				& \geq \int_{ \Omega } \left( \frac{ w_k - w_{k-1} }{ \tau } \right)^2 dx
			\end{aligned}
		\end{equation}
		Using the above inequality and $ \eqref{tdp4.6} $ - $ \eqref{tdp4.7} $ in equation $ \eqref{tdp4.5} $ we are then able to obtain,
		\begin{equation}
			\begin{aligned}
				- \int_{ \Omega } \Delta & \left( \frac{ \sinh \left( w_k \right) - \sinh \left( w_{k-1} \right) }{ \tau } \right) \left( \frac{ u_k - u_{k-1} }{ \tau } \right) dx \\
				&\geq \frac{1}{2} \int_{ \Omega } \left( \left\lvert \nabla \sinh \left( w_k \right) \right\rvert^2 - \left\lvert \nabla \sinh \left( w_{k-1} \right) \right\rvert^2 \right) dx \\
				& + \tau \int_{ \Omega } \left( w_k \sinh \left( w_k \right) - w_{k-1} \sinh \left( w_{k-1} \right) \right) dx + \int_{ \Omega } \left( \frac{ w_k - w_{k-1} }{ \tau } \right)^2 dx \label{tdp4.8}
			\end{aligned}
		\end{equation}
		Finally for the last integral in equation $ \eqref{tdp4.3} $ we have,
		\begin{equation}
			\begin{aligned}
				\tau \int_{ \Omega } \left( \frac{ w_k - w_{k-1} }{ \tau } \right) & \left( \frac{ u_k - u_{k-1} }{ \tau } \right) dx = - \tau \int_{ \Omega } \Delta \left( \frac{ u_k - u_{k-1} }{ \tau } \right) \left( \frac{ u_k - u_{k-1} }{ \tau } \right) dx \\
				& + \tau^2 \int_{ \Omega } \left( \frac{ u_k - u_{k-1} }{ \tau } \right)^2 dx \\
				& = \tau \int_{ \Omega } \left\lvert \nabla \left( \frac{ u_k - u_{k-1} }{ \tau } \right) \right\rvert^2 dx + \tau^2 \int_{ \Omega } \left( \frac{ u_k - u_{k-1} }{ \tau } \right)^2 dx \label{tdp4.9}
			\end{aligned}
		\end{equation}
		Then using $ \eqref{tdp4.4}, \eqref{tdp4.8}, $ and $ \eqref{tdp4.9} $, in equation $ \eqref{tdp4.3} $, we obtain the inequality, 
		\begin{equation}
			\begin{aligned}
				\frac{1}{2\tau} \int_{ \Omega } & \left[ \left( \frac{ u_k - u_{k-1} }{\tau} \right)^2 - \left( \frac{ u_{k-1} - u_{k-2} }{ \tau } \right)^2 \right] dx + \int_{ \Omega } \left( \frac{ w_k - w_{k-1} }{ \tau } \right)^2 dx \\
				& + \frac{1}{2} \int_{ \Omega } \left( \left\lvert \nabla \sinh \left( w_k \right) \right\rvert^2 - \left\lvert \nabla \sinh \left( w_{k-1} \right) \right\rvert^2 \right) dx + \tau \int_{ \Omega } \left\lvert \nabla \left( \frac{u_k - u_{k-1}}{\tau} \right) \right\rvert^2 dx \\
				& + \tau \int_{ \Omega } \left( w_k \sinh \left( w_k \right) - w_{k-1} \sinh \left( w_{k-1} \right) \right) dx + \tau^2 \int_{ \Omega } \left( \frac{ u_k - u_{k-1} }{ \tau } \right)^2 dx \leq 0.
			\end{aligned}
		\end{equation}
		Multiply through this inequality by $ \tau $, sum up over $ k $, and take into account \eqref{wu1} to get the result.
	\end{proof}
	
	We are now ready to prove the necessary compactness to justify taking the limit in our equations. First we will prove the compactness for the piecewise linear approximations $\tilde{u}_j$ and $\tilde{w}_j$ defined by \eqref{usquig} and \eqref{wsquig}. 
	% the claim, 
	\begin{prop}
		The sequences $ \{\tilde{u}_j \} $ and $ \{ \tilde{w}_j \} $ are bounded in $W^{1,2}(\Omega_T)$, and hence precompact in $ L^2 \left( \Omega_T \right) $.
	\end{prop}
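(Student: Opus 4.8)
The plan is to read off from Propositions \ref{p31}--\ref{p33} a short list of bounds that are uniform in $j$, transfer them from the piecewise-constant interpolants $\bar u_j,\bar w_j$ to the piecewise-linear interpolants $\tilde u_j,\tilde w_j$, and then invoke a compact Sobolev embedding on the space-time cylinder $\ot$. Recall that membership in $W^{1,2}(\ot)$ requires control of $\tilde u_j,\tilde w_j$ together with their spatial \emph{and} time gradients in $L^2(\ot)$.

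First I would check that the right-hand sides of \eqref{tdp1.1}, \eqref{tdp2.1} and \eqref{tdp4.1} are bounded independently of $j$. The only terms that require care are those built from $w_0$, since $w_0=-\Delta u_0+\tau u_0$ depends on $\tau$; but using $u_0\in L^\infty(\Omega)$ (which the text has already noted follows from (H3)), the elementary inequality $\cosh(a+b)\le 2\cosh a\cosh b$ and the addition formula for $\sinh$, one bounds $\int_\Omega\cosh(w_0)\,dx$, $\|\sinh(w_0)\|_{W^{2,2}(\Omega)}$ and $\int_\Omega w_0\sinh(w_0)\,dx$ by constants depending only on the data, uniformly for $\tau\le 1$; the remaining data terms are dominated by $\|\nabla u_0\|_{L^2(\Omega)}^{2}+\|u_0\|_{L^2(\Omega)}^{2}$. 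Isolating the relevant terms on the left-hand sides, this produces a constant $C$, independent of $j$, with
\[
\|\partial_t\tilde u_j\|_{L^2(\ot)}+\|\partial_t\tilde w_j\|_{L^2(\ot)}+\|\nabla\bar w_j\|_{L^2(\ot)}+\sup_{0\le t\le T}\|\nabla\bar u_j(\cdot,t)\|_{L^2(\Omega)}\le C ,
\]
the first summand coming from \eqref{tdp1.1}, the second from \eqref{tdp4.1}, and the last two from \eqref{tdp2.1}. I would also record the uniform bounds $\|w_0\|_{L^2(\Omega)}\le\|\sinh(-\Delta u_0)\|_{L^2(\Omega)}+\tau\|u_0\|_{L^2(\Omega)}$ (from $|\operatorname{arcsinh} s|\le|s|$) and $\|\nabla w_0\|_{L^2(\Omega)}\le\|\nabla\sinh(-\Delta u_0)\|_{L^2(\Omega)}+\tau\|\nabla u_0\|_{L^2(\Omega)}$ (from $\cosh\ge 1$ applied to $\nabla\sinh(-\Delta u_0)=\cosh(-\Delta u_0)\nabla(-\Delta u_0)$).

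Next I would move these bounds onto the piecewise-linear interpolants. Writing $\tilde u_j(\cdot,t)=u_0+\int_0^t\partial_t\tilde u_j(\cdot,s)\,ds$ and applying Minkowski's inequality gives $\sup_t\|\tilde u_j(\cdot,t)\|_{L^2(\Omega)}\le\|u_0\|_{L^2(\Omega)}+\sqrt{T}\,\|\partial_t\tilde u_j\|_{L^2(\ot)}$, and the same computation starting from $\tilde w_j(\cdot,0)=w_0$ bounds $\tilde w_j$ in $L^2(\ot)$ (here the uniform bound on $\|w_0\|_{L^2(\Omega)}$ is used). For the spatial gradients, on each slab $(t_{k-1},t_k]$ the interpolant $\tilde u_j$ is a convex combination of $u_{k-1}$ and $u_k$, so convexity of $z\mapsto|z|^2$ bounds $\int_\Omega|\nabla\tilde u_j(\cdot,t)|^2\,dx$ by $\|\nabla u_0\|_{L^2(\Omega)}^2+\sup_t\|\nabla\bar u_j(\cdot,t)\|_{L^2(\Omega)}^2$; integrating in $t$ gives a uniform $L^2(\ot)$ bound on $\nabla\tilde u_j$. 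The same convexity estimate applied slab by slab and summed over $k$ yields
\[
\int_{\ot}|\nabla\tilde w_j|^2\,dxdt\le\int_{\ot}|\nabla\bar w_j|^2\,dxdt+\tfrac{\tau}{2}\,\|\nabla w_0\|_{L^2(\Omega)}^2 ,
\]
whose right-hand side is bounded by the first step.

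Finally, the two sequences are bounded in $W^{1,2}(\ot)$; since $\ot$ is a bounded Lipschitz domain in $\mathbb{R}^{N+1}$, the embedding $W^{1,2}(\ot)\hookrightarrow L^2(\ot)$ is compact by Rellich--Kondrachov, so $\{\tilde u_j\}$ and $\{\tilde w_j\}$ are precompact in $L^2(\ot)$. I expect the genuine obstacle here to be not a single hard inequality but the combination of (a) the $j$-uniform control of the data terms built from the $\tau$-dependent $w_0$ — which is exactly where the hypothesis $\sinh(-\Delta u_0)\in W^{2,2}(\Omega)$ in (H3) is needed — and (b) the slab-by-slab bookkeeping that transfers the $\bar{\,\cdot\,}$-estimates, which for $\nabla\bar w_j$ are only available in time-integrated form (unlike $\nabla\bar u_j$, which is controlled in $L^\infty(0,T;L^2(\Omega))$), over to $\nabla\tilde w_j$.
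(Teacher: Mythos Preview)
Your proof is correct and follows essentially the same architecture as the paper's: extract uniform bounds from Propositions~\ref{p31}--\ref{p33}, transfer them from the piecewise-constant to the piecewise-linear interpolants via slab-by-slab convexity, and conclude by compact embedding. The slab estimate you write for $\nabla\tilde w_j$ is in fact the same one the paper derives (with slightly sharper constants on your side).

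The one genuine difference is how you control $\|\tilde u_j\|_{L^2(\ot)}$ and $\|\tilde w_j\|_{L^2(\ot)}$. You use the fundamental theorem of calculus in time, $\tilde u_j(\cdot,t)=u_0+\int_0^t\partial_t\tilde u_j\,ds$, together with the already-established $L^2(\ot)$ bound on $\partial_t\tilde u_j$ and a uniform $L^2(\Omega)$ bound on the initial datum $w_0$. The paper instead integrates the discretized equations \eqref{ttd1}--\eqref{ttd2} over $\Omega$ to control the spatial means $\int_\Omega\tilde u_j\,dx$ and $\int_\Omega\tilde w_j\,dx$, and then applies Poincar\'e's inequality with the gradient bounds. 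Your route is arguably more direct and does not require tracking the mean, but it does force you to bound $\|w_0\|_{L^2(\Omega)}$ uniformly in $\tau$, which you handle correctly via $|s|\le|\sinh s|$; the paper's route avoids this but uses the structure of the equations a second time. Both are short and equally valid. Your explicit discussion of the $\tau$-dependence of the data terms built from $w_0$ is a useful addition that the paper leaves implicit.
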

	\begin{proof}
		To demonstrate this we first notice that by Proposition \ref{p31} the sequence $ \{ \frac{\partial \tilde{u}_j }{\partial t} \} $ is bounded in $ L^2 \left( \Omega_T \right) $. For each $t\in(0,T]$ there is a $k$ such that $t\in (t_{k-1}, t_k]$. Then we have %Then we estimate the gradient as follows,
		\begin{equation}
			\begin{aligned}
				\int_{ \Omega } \left\lvert \nabla \tilde{u}_j \left( x, t \right) \right\rvert^2 dxdt & =   \int_{ \Omega } \left\lvert \frac{t-t_{k-1}}{\tau} \nabla u_k + \left( 1 - \frac{t-t_{k-1}}{\tau} \right) \nabla u_{k-1} \right\rvert^2 dxdt \\
				& \leq   \frac{t-t_k}{\tau} \int_{ \Omega } \left\lvert \nabla u_k \right\rvert^2 dx + \left( 1 - \frac{t-t_k}{\tau} \right) \int_{ \Omega } \left\lvert \nabla u_{k-1} \right\rvert^2 dx  \\
				&\leq \sup_{ 0 \leq t \leq T }\int_{ \Omega } \left\lvert \nabla \bar{u}_j \right\rvert^2 dx\leq c.\label{rs1}
				%	& \leq \sum_{k=1}^{j} \tau \left[ \int_{ \Omega } \left\lvert \nabla u_k \right\rvert^2 dx + \int_{ \Omega } \left\lvert \nabla u_{k-1} \right\rvert^2 dx \right] \\
				%	& \leq c \int_{ \Omega_T } \left\lvert \nabla \bar{u}_j \right\rvert^2 dx dt + \tau c \int_{ \Omega } \left\lvert \nabla u_0 \right\rvert^2 dx \leq c.\nonumber
			\end{aligned}
		\end{equation}
		The last step is due to Proposition \ref{p32}. Now we integrate \eqref{ttd1} over $\Omega$ to get
		$$\frac{d}{dt}\int_{ \Omega }\tilde{u}_j \left( x, t \right)dx+\tau\int_{ \Omega }\bar{w}_j(x,t)dx=0.$$
		Integrate with respect to $t$ and keep \eqref{tdp1.1} in mind to deduce
		\begin{equation}
			\left|\int_{ \Omega }\tilde{u}_j \left( x, t \right)dx\right|\leq c.
		\end{equation}
		By Poincar\'{e}'s inequality, we have
		\begin{eqnarray}
			\int_{ \Omega }\tilde{u}_j^2 \left( x, t \right)dx&\leq &2\int_{ \Omega }\left(\tilde{u}_j \left( x, t \right)-\frac{1}{|\Omega|}\int_{ \Omega }\tilde{u}_j \left( x, t \right)dx\right)^2dx+\frac{2}{|\Omega|}\left(\int_{ \Omega }\tilde{u}_j \left( x, t \right)dx\right)^2\nonumber\\
			&\leq&c\int_{ \Omega }\left|\nabla \tilde{u}_j \left( x, t \right)\right|^2dx+c\leq c.
		\end{eqnarray}
		%	From this estimate we can see that
		Now we can conclude that the sequence $ \{ \tilde{u}_j \} $ is bounded in $ W^{1,2} \left( \Omega_T \right) $. As such we may conclude that the sequence is precompact in $ L^2 \left( \Omega_T \right) $.
		In the same manner, only using Propositions \ref{p32} and \ref{p33},  we also have that the sequence $ \{ \tilde{w}_j \} $ is bounded in $ W^{1,2} \left( \Omega_T \right) $. The main difference is that instead of \eqref{rs1}
		we use the estimate	\begin{equation}
			\begin{aligned}
				\int_{ \Omega_T } \left\lvert \nabla \tilde{w}_j \left( x, t \right) \right\rvert^2 dxdt & = \sum_{k=1}^{j} \int_{t_{k-1}}^{t_{k}} \int_{ \Omega } \left\lvert \frac{t-t_{k-1}}{\tau} \nabla w_k + \left( 1 - \frac{t-t_{k-1}}{\tau} \right) \nabla w_{k-1} \right\rvert^2 dxdt \\
				& \leq \sum_{k=1}^{j} \int_{t_{k-1}}^{t_{k}} \left[ \frac{t-t_{k-1}}{\tau} \int_{ \Omega } \left\lvert \nabla w_k \right\rvert^2 dx + \left( 1 - \frac{t-t_{k-1}}{\tau} \right) \int_{ \Omega } \left\lvert \nabla w_{k-1} \right\rvert^2 dx \right] dt \\
				%	&\leq T\sup_{ 0 \leq t \leq T }\int_{ \Omega } \left\lvert \nabla \bar{u}_j \right\rvert^2 dx\leq c.\nonumber
				& \leq \sum_{k=1}^{j} \tau \left[ \int_{ \Omega } \left\lvert \nabla w_k \right\rvert^2 dx + \int_{ \Omega } \left\lvert \nabla w_{k-1} \right\rvert^2 dx \right] \\
				& \leq c \int_{ \Omega_T } \left\lvert \nabla \bar{w}_j \right\rvert^2 dx dt + \tau c \int_{ \Omega } \left\lvert \nabla w_0 \right\rvert^2 dx \leq c\nonumber
			\end{aligned}
		\end{equation}
		and
		$$	\left|\int_{ \Omega }\tilde{w}_j \left( x, t \right)dx\right|\leq c$$
		is a consequence of \eqref{ttd2}. The proof is complete.
	\end{proof}

	In the preceding proposition, the linear approximations $\tilde{u}$ and $\tilde{w}$ were crucial in order to be able to justify the regularity with respect to time. Our next proposition deals with the compactness of the piecewise constant functions $\bar{u}$ and $\bar{w}$. 

	\begin{prop}
		The sequences $ \{ \bar{u}_j \} $ and $ \{ \bar{w}_j \} $ given by \eqref{ubar} and \eqref{wbar} are precompact in $ L^2 \left( \Omega_T \right) $.
	\end{prop}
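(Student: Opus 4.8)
The plan is to deduce the precompactness of $\{\bar{u}_j\}$ and $\{\bar{w}_j\}$ from that of the piecewise linear interpolants $\{\tilde{u}_j\}$ and $\{\tilde{w}_j\}$ established in the preceding proposition, by showing that the differences $\bar{u}_j-\tilde{u}_j$ and $\bar{w}_j-\tilde{w}_j$ vanish in $L^2(\Omega_T)$ as $j\to\infty$.

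First I would record the elementary identity valid on $\Omega\times(t_{k-1},t_k]$,
\begin{equation}
	\bar{u}_j(x,t)-\tilde{u}_j(x,t)=\left(1-\frac{t-t_{k-1}}{\tau}\right)\bigl(u_k(x)-u_{k-1}(x)\bigr),
\end{equation}
together with its counterpart with $w$ in place of $u$. Since the coefficient lies in $[0,1]$ on each subinterval, squaring and integrating over $\Omega_T$ gives
\begin{equation}
	\int_{\Omega_T}\left(\bar{u}_j-\tilde{u}_j\right)^2dx\,dt\leq\sum_{k=1}^{j}\tau\int_{\Omega}(u_k-u_{k-1})^2dx=\tau^2\int_{\Omega_T}\left(\frac{\partial\tilde{u}_j}{\partial t}\right)^2dx\,dt,
\end{equation}
where the last equality uses $u_k-u_{k-1}=\tau\,\partial_t\tilde{u}_j$ on each subinterval. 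By Proposition \ref{p31} the right-hand side is at most $c\tau^2$, and since $\tau=T/j\to0$ we conclude $\bar{u}_j-\tilde{u}_j\to0$ in $L^2(\Omega_T)$. The same computation yields
\begin{equation}
	\int_{\Omega_T}\left(\bar{w}_j-\tilde{w}_j\right)^2dx\,dt\leq\tau^2\int_{\Omega_T}\left(\frac{\partial\tilde{w}_j}{\partial t}\right)^2dx\,dt\leq c\tau^2\longrightarrow0,
\end{equation}
the middle bound coming from Proposition \ref{p33}.

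Finally, since $\{\tilde{u}_j\}$ and $\{\tilde{w}_j\}$ are precompact in $L^2(\Omega_T)$ and we have just shown that $\bar{u}_j-\tilde{u}_j$ and $\bar{w}_j-\tilde{w}_j$ converge to $0$ there, the precompactness of $\{\bar{u}_j\}$ and $\{\bar{w}_j\}$ follows, because a relatively compact set perturbed by an $L^2$-null sequence remains relatively compact. The only routine verification needed along the way is that the data-dependent right-hand sides in Propositions \ref{p31} and \ref{p33} are bounded uniformly in $j$; this is immediate from (H3) once one notes that $w_0=-\Delta u_0+\tau u_0$, so that $\cosh(w_0)$, $\left\lvert\nabla\sinh(w_0)\right\rvert^2$, and $\Delta\sinh(w_0)-\tau w_0$ are all controlled by $u_0$ uniformly as $\tau\to0$. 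There is no genuine obstacle here: the argument is the standard time-interpolation comparison, and the hardest step is simply the one-line estimate $\left\lVert\bar{u}_j-\tilde{u}_j\right\rVert_{L^2(\Omega_T)}^2\leq\tau^2\left\lVert\partial_t\tilde{u}_j\right\rVert_{L^2(\Omega_T)}^2$.
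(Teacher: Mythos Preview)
Your proof is correct and follows essentially the same route as the paper: both show $\left\lVert \bar{u}_j-\tilde{u}_j\right\rVert_{L^2(\Omega_T)}^2\leq c\tau^2$ (and likewise for $w$) via the identity expressing $\bar{u}_j-\tilde{u}_j$ as a multiple of $\partial_t\tilde{u}_j$, then invoke the precompactness of the piecewise-linear interpolants. The only cosmetic difference is that the paper integrates the factor $(t-t_k)^2$ exactly to obtain $\tau^2/3$ rather than bounding the coefficient by $1$, and it does not spell out the uniform-in-$j$ boundedness of the initial-data terms that you helpfully note.
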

	\begin{proof}
		To go about proving this proposition we calculate using \eqref{usquig} and \eqref{ubar}, for $ t \in (t_{k-1},t_k] $,
		\begin{equation}
			\begin{aligned}
				\tilde{u}_j \left( x, t \right) - \bar{u}_j \left( x, t \right) & = \frac{t-t_k}{\tau} \left( u_k - u_{k-1} \right) \\
				& = \left( t - t_k \right) \frac{ \partial \tilde{u}_j }{ \partial t }. 
			\end{aligned}
		\end{equation}
		Consequently,	
		\begin{eqnarray}
			\int_{ \Omega_T } \left( \tilde{u}_j - \bar{u}_j \right)^2 dx dt&=& \sum_{k=1}^{j}  \int_{ \Omega }\int_{t_{k-1}}^{t_k}\left(\left( t - t_k \right) \frac{ \partial \tilde{u}_j }{ \partial t }\right)^2dtdx \nonumber\\
			&\leq& \frac{\tau^2}{3} \int_{ \Omega_T } \left( \frac{ \partial \tilde{u}_j }{ \partial t} \right)^2 dx dt\leq c\tau^2.\label{tb1}
		\end{eqnarray}
		This implies that the sequence $ \{ \bar{u}_j \} $ is precompact in $ L^2 \left( \Omega_T \right) $. 
		%	Similarly we have,
		%	\begin{equation}
			%		\tilde{w}_j \left( x, t \right) - \bar{w}_j \left( x, t \right) = \left( t - t_k \right) \frac{ \partial %\tilde{w}_j }{ \partial t }
			%	\end{equation}
		%	Then we calculate,
		
		The  estimate \eqref{tb1} also holds for $ \left( \tilde{w}_j - \bar{w}_j \right) $. The only difference in the proof is that we use \eqref{wsquig} and \eqref{wbar}. Subsequently we may conclude that the sequence $ \{ \bar{w}_j \} $ is precompact in $ L^2 \left( \Omega_T \right) $. This completes the proof.
	\end{proof}
	\begin{prop}\label{p36}
		The sequences $ \{\sinh\left(\bar{w}_j \right)\} $ and $ \{\bar{u}_j \} $ are bounded in $L^2\left( 0,T; W^{2,2} \left( \Omega \right)\right)  $. 
	\end{prop}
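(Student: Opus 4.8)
The plan is to read off both bounds from the elliptic identities relating $\bar u_j$, $\bar w_j$ and $\sinh(\bar w_j)$, feeding in the $j$-uniform estimates of Propositions \ref{p31} and \ref{p32} and converting control of the Laplacian into control of the full Hessian via Lemma \ref{lwt}; throughout one uses $\tau\le 1$ so that the $\tau$-weighted terms appearing in those propositions are harmless.

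First I would treat $\bar u_j$. From \eqref{ttd2} one has $\Delta\bar u_j=\tau\bar u_j-\bar w_j$ a.e.\ on $\ot$. Integrating \eqref{td2} over $\Omega$ gives $\tau\int_\Omega\bar u_j\,dx=\int_\Omega\bar w_j\,dx$, so that $\big|\int_\Omega\bar w_j\,dx\big|\le|\Omega|^{1/2}\tau^{1/2}\big(\tau\int_\Omega\bar u_j^2\,dx\big)^{1/2}\le c\tau^{1/2}$ by Proposition \ref{p32}; combined with $\iot|\nabla\bar w_j|^2\,dxdt\le c$ and Poincar\'e's inequality this yields $\iot\bar w_j^2\,dxdt\le c$. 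Since $\tau^2\iot\bar u_j^2\,dxdt=\tau\int_0^T\!\big(\tau\int_\Omega\bar u_j^2\,dx\big)\,dt\le c\tau T$, the identity for $\Delta\bar u_j$ gives $\iot(\Delta\bar u_j)^2\,dxdt\le c$. Applying Lemma \ref{lwt} slice-wise in $t$ (and using $\sup_t\int_\Omega|\nabla\bar u_j|^2\,dx\le c$ in the $C^2$-boundary case) converts this into $\iot|\nabla^2\bar u_j|^2\,dxdt\le c$; together with $\iot|\nabla\bar u_j|^2\,dxdt\le c$ and, via Poincar\'e once more together with the already known bound $\big|\int_\Omega\bar u_j\,dx\big|\le c$, the bound $\iot\bar u_j^2\,dxdt\le c$, this shows $\{\bar u_j\}$ is bounded in $L^2(0,T;\wtt)$.

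For $\sinh(\bar w_j)$ I would start from the two facts supplied by Proposition \ref{p31}: $\iot(\Delta\sinh(\bar w_j))^2\,dxdt\le c$ and $\sup_t\|\sinh(\bar w_j)(\cdot,t)\|_{1,\Omega}\le\sup_t\int_\Omega\cosh(\bar w_j)\,dx\le c$. The delicate point is that $\nabla\sinh(\bar w_j)=\cosh(\bar w_j)\nabla\bar w_j$ cannot be bounded directly from $\iot|\nabla\bar w_j|^2\,dxdt\le c$, since $\cosh(\bar w_j)$ has no $j$-uniform pointwise bound. Instead I would use an interpolation inequality (Lemma \ref{l22}(2) when $\po$ is $C^2$, and a Gagliardo--Nirenberg inequality on the convex domain otherwise) to write $\|\nabla\sinh(\bar w_j)\|_{2,\Omega}\le\epsilon\|\nabla^2\sinh(\bar w_j)\|_{2,\Omega}+c\|\sinh(\bar w_j)\|_{1,\Omega}$, bound $\|\nabla^2\sinh(\bar w_j)\|_{2,\Omega}$ by $c\|\Delta\sinh(\bar w_j)\|_{2,\Omega}+c\|\nabla\sinh(\bar w_j)\|_{2,\Omega}$ via Lemma \ref{lwt}, and take $\epsilon$ small enough to absorb the gradient term. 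This produces $\|\nabla\sinh(\bar w_j)\|_{2,\Omega}^2\le c\|\Delta\sinh(\bar w_j)\|_{2,\Omega}^2+c$ pointwise in $t$, hence $\iot|\nabla\sinh(\bar w_j)|^2\,dxdt\le c$ after integrating in $t$. A last application of Lemma \ref{lwt} then bounds $\iot|\nabla^2\sinh(\bar w_j)|^2\,dxdt$, and Poincar\'e's inequality with the $L^1$ bound bounds $\iot\sinh(\bar w_j)^2\,dxdt$, giving the boundedness of $\{\sinh(\bar w_j)\}$ in $L^2(0,T;\wtt)$.

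The main obstacle is precisely the $\cosh(\bar w_j)$ factor in $\nabla\sinh(\bar w_j)$: the only control on the exponential nonlinearity available at this stage is the ``$L^\infty$ in $t$, $L^1$ in $x$'' bound on $\cosh(\bar w_j)$ coming from the energy estimate \eqref{tdp1.1}, so the argument must be routed through that $L^1$ bound together with the second-order elliptic absorption above rather than through any pointwise estimate on $\bar w_j$. The remaining steps are routine bookkeeping with the $\tau$-weighted terms in Propositions \ref{p31} and \ref{p32}.
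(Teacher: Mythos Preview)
Your proposal is correct and follows essentially the same route as the paper: for $\sinh(\bar w_j)$ you reproduce the paper's argument almost verbatim (the $L^1$ bound via $|\sinh|\le\cosh$, then the interpolation inequality of Lemma~\ref{l22}(2) combined with Lemma~\ref{lwt} to absorb $\|\nabla\sinh(\bar w_j)\|_2$ into $\|\Delta\sinh(\bar w_j)\|_2$), and you correctly identify the key obstacle and its resolution. For $\bar u_j$ the paper simply says ``we can easily infer from \eqref{ttd2} and Lemma~\ref{lwt}'' without writing out the Poincar\'e and $\tau$-bookkeeping steps you supply, but your more explicit version is the natural unpacking of that remark.
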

	\begin{proof} We can easily infer from \eqref{ttd2} and Lemma \ref{lwt} that $ \{\bar{u}_j \} $ is bounded in $L^2\left( 0,T; W^{2,2} \left( \Omega \right)\right)  $. 
		To see the rest,  we clearly have for $ s \geq 0 $ the following inequality,
		\begin{equation}
			\sinh(s) \leq \cosh(s).
		\end{equation}
		Then since $ \sinh $ is odd and $ \cosh $ is even we able to conclude that,
		\begin{equation}
			\left\lvert \sinh(s) \right\rvert \leq \cosh(s).
		\end{equation}
		Then using Proposition \ref{p31}, we get the estimate,
		\begin{equation}
			\int_{ \Omega } \left\lvert \sinh(\bar{w}_j) \right\rvert dx \leq \int_{ \Omega } \cosh \left( \bar{w}_j \right) dx \leq c.
		\end{equation}
		%Using Proposition \ref{p31} again along with Lemma \ref{l22} with $\epsilon = 1$ 
		We calculate from Lemmas \ref{l22} and \ref{lwt} that 
		\begin{equation}
			\begin{aligned}
				\left\lVert \nabla\sinh \left( \bar{w}_j \right) \right\rVert_{2,\Omega} &\leq \epsilon\left\lVert \nabla^2 \sinh \left( \bar{w}_j \right) \right\rVert_{2,\Omega} + c \left\lVert \sinh \left( \bar{w}_j \right) \right\rVert_{1,\Omega} \\
				& \leq c\epsilon\left\lVert\Delta \sinh \left( \bar{w}_j \right) \right\rVert_{2,\Omega} +c\epsilon\left\lVert \nabla\sinh \left( \bar{w}_j \right) \right\rVert_{2,\Omega} + c .
				%\left\lVert \sinh \left( \bar{w}_j\right) \right\rVert_{1,\Omega} \\
				%	& \leq \left\lVert \Delta \sinh \left( \bar{w}_j \right) \right\rVert_{2,\Omega} + c \left\lVert \cosh \left( \bar{w}_j \right) \right\rVert_{1,\Omega} \\
				%	& \leq c.
			\end{aligned}
		\end{equation}
		Choose $\epsilon$ suitably small to get
		$$\left\lVert \nabla\sinh \left( \bar{w}_j \right) \right\rVert_{2,\Omega}\leq c\left\lVert\Delta \sinh \left( \bar{w}_j \right) \right\rVert_{2,\Omega}+c.$$
		Square the above inequality and then integrate to get
		$$\iot\left| \nabla\sinh \left( \bar{w}_j \right)\right|^2dxdt\leq c\iot\left| \Delta\sinh \left( \bar{w}_j \right)\right|^2dxdt\leq c.$$
		The last step is due to Proposition \ref{p31}. Use Lemma \ref{l22} again to get
		$$\iot\left|\sinh \left( \bar{w}_j \right)\right|^2dxdt\leq c.$$
		Invoking Proposition \ref{p31} and Lemma \ref{lwt} one more time, we can derive
		$$\iot\left| \nabla^2\sinh \left( \bar{w}_j \right)\right|^2dxdt\leq c\iot\left| \Delta\sinh \left( \bar{w}_j \right)\right|^2dxdt+ c\iot\left| \nabla\sinh \left( \bar{w}_j \right)\right|^2dxdt\leq c.$$
		This finishes the proof.
	\end{proof}
	
	We are now ready to prove our main theorem.
	
	\begin{proof}[Proof of Main Theorem]
		Passing to subsequences if need be we may assume,
		\begin{equation}
			\begin{aligned}
				\bar{u}_j \rightarrow &u \;\; \mbox{ weakly in } \; L^2(0,T;\wot), \; \mbox{ strongly in } \; L^2 \left( \Omega_T \right), \; \mbox{ and a.e. on } \; \Omega_T,\\
				\bar{w}_j \rightarrow &w \;\; \mbox{ weakly in } \; L^2(0,T;\wot) , \; \mbox{ strongly in } \; L^2 \left( \Omega_T \right), \; \mbox{ and a.e. on } \; \Omega_T.
			\end{aligned}
		\end{equation}
		By \eqref{tb1}, we also have
		$$	\tilde{u}_j \rightarrow u \;\; \mbox{ weakly in } \; W^{1,2}(\Omega_T).$$
		%, \; \mbox{ strongly in } \; L^2 \left( \Omega_T \right), \; \mbox{ and a.e. on } \; \Omega_T
		On account of Proposition \ref{p36} we obtain
		\begin{equation}
			\sinh \left( \bar{w}_j \right) \rightarrow \sinh \left( w \right) \;\; \mbox{ weakly in } \; L^2 \left( 0, T; W^{2,2} \left( \Omega \right) \right) \;.
		\end{equation}
		%	We then recall equation (3.11) and use proposition (3.4) to conclude that $-\Delta \bar{u}_j \rightarrow -\Delta u$ strongly in $L^2(\Omega_T)$.
		Thus we may pass to the limit in \eqref{ttd1} and \eqref{ttd2}. The proof is complete.
	\end{proof}
	
	%FIGURES and a.e. on $\Omega_T$ \mbox{ and a.e. on } \; \Omega_T
	%The use of .eps files is encouraged, in which case you should
	%un-comment the \uspackage{graphics} and \uspackage{epstopdf} command above, and use the
	%command \include{figure.eps} to insert the figure file.

	%\begin{figure}[htbp]
	%  \centering
	% Requires \usepackage{graphicx}
	%  \includegraphics[width=0.5\textwidth]{fig2}
	%  \caption{}\label{fig2}
	%\end{figure}
	
	%\begin{table}
	%  \centering
	%\begin{tabular}{||||}
	% \hline
	% after \\: \hline or \cline{col1-col2} \cline{col3-col4} ...
	%   &  &  \\
	%   &  &  \\
	%   &  &  \\
	%  \hline
	%\end{tabular}
	%\caption{}\label{}
	%\end{table}

	% Non-BibTeX users please use

\end{document}